\newtheorem{theorem}{Theorem}[section]
\newtheorem{lemma}[theorem]{Lemma}
\newtheorem{corollary}[theorem]{Corollary}
\newtheorem{question}[theorem]{Question}
\newtheorem{remark}[theorem]{Remark}
\theoremstyle{definition}
\newtheorem{definition}[theorem]{Definition}
\newtheorem{proposition}[theorem]{Proposition}
\begin{document}

\title[The characterizations of hyperspaces and free topological groups with an $\omega^\omega$-base]
{The characterizations of hyperspaces and free topological groups with an $\omega^\omega$-base}

\author{Fucai Lin*}
\address{Fucai Lin: 1. School of mathematics and statistics,
Minnan Normal University, Zhangzhou 363000, P. R. China; 2. Fujian Key Laboratory of Granular Computing and Application, Minnan Normal University, Zhangzhou 363000, China}
\email{linfucai@mnnu.edu.cn; linfucai2008@aliyun.com}

\author{Chuan Liu}
\address{(Chuan Liu): Department of Mathematics,
Ohio University Zanesville Campus, Zanesville, OH 43701, USA}
\email{liuc1@ohio.edu}

\thanks{Fucai Lin is supported by Fujian Provincial Natural Science Foundation of China (No.2024J02022) and the National Natural Science Foundation of China (Grant No. 11571158).}

\keywords{free topological groups; hyperspace; metrizable space; $\omega^\omega$-bases; Fr\'echet-Urysohn space}
\subjclass[2010]{Primary 54B20; secondary 22A05, 5430, 54D45, 54D70, 54E35}

\begin{abstract}
A topological space $(X, \tau)$ is said to be have an {\it $\omega^\omega$-base} if for each point $x\in X$ there exists a neighborhood base $\{U_{\alpha}[x]: \alpha\in\omega^\omega\}$ such that $U_{\beta}[x]\subset U_{\alpha}[x]$ for all $\alpha\leq\beta$ in $\omega^\omega$. In this paper, the characterization of a space $X$ is given such that the free Abelian topological group $A(X)$, the hyperspace $CL(X)$ with the Vietoris topology and the hyperspace $CL(X)$ with the Fell topology have $\omega^\omega$-bases respectively. The main results are listed as follows:

\smallskip
(1) For a Tychonoff space $X$, the free Abelian topological group $A(X)$ is a $k$-space with an $\omega^\omega$-base if and only if $X$ is a topological sum of a discrete space and a submetrizable $k_\omega$-space.

\smallskip
(2) If $X$ is a metrizable space, then $(CL(X), \tau_V)$ has an $\omega^\omega$-base if and only if $X$ is separable and the boundary of each closed subset of $X$ is $\sigma$-compact.

\smallskip
(3) If $X$ is a metrizable space, then $(CL(X), \tau_F)$ has an $\omega^\omega$-base consisting of basic neighborhoods if and only if $X$ is a Polish space.

\smallskip
(4) If $X$ is a metrizable space, then $(CL(X), \tau_F)$ is a Fr\'echet-Urysohn space with an $\omega^\omega$-base, if and only if $(CL(X), \tau_F)$ is first-countable, if and only if $X$ is a locally compact and second countable space.
\end{abstract}

\maketitle
\section{Introduction and Preliminaries}
Denote the sets of real numbers, rational numbers, positive integers and all non-negative integers by $\mathbb{R}$, $\mathbb{Q}$, $\mathbb{N}$, and $\omega$, respectively. {\bf All spaces are assumed to be Tychonoff}. Readers may refer to
\cite{AT2008,E1989,G1984} for terminology and notations not
explicitly given here.

Let $\omega^\omega$ be the set all the functions from $\omega$ to $\omega$ with the natural partial order defined
by $\alpha\leq\beta$ iff $\alpha(n)\leq \beta(n)$ for all $\alpha, \beta\in \omega^\omega$. A topological space $(X, \tau)$ is said to be have an {\it $\omega^\omega$-base} if for each point $x\in X$ there exists a neighborhood base $\{U_{\alpha}[x]: \alpha\in\omega^\omega\}$ such that $U_{\beta}[x]\subset U_{\alpha}[x]$ for all $\alpha\leq\beta$ in $\omega^\omega$.
The concept of $\omega^\omega$-base has been formally introduced in \cite{FJPS2006} for studying (DF)-spaces, C(X)-spaces in the frame of locally convex spaces. Now the class of spaces with an $\omega^\omega$-base are known in Functional Analysis and has been studied in \cite{B2017,FJPS2006,FJ2013,GKL2015,GK2016,LRZ2020}. In particular, Gabriyelyan, Kakol, Leiderman in \cite{GKL2015} studied the class of
topological groups with an $\omega^\omega$-base, and posed many interesting open questions about spaces with an $\omega^\omega$-base. Then Banakh in \cite{B2017} systematically study topological spaces with an $\omega^\omega$-base.

The main goal of the article is a thorough study of the classes of hyperspaces and free Abelian topological groups with an $\omega^\omega$-base. It is well known that hyperspaces have been the
focus of much research, see \cite{B1993, HP2002, LL2023,LL2022,M1951, NP2015}.
There are many results on the hyperspace $CL(X)$ of a topological
space $X$ equipped with various topologies. In this paper, we endow
$CL(X)$ with the Vietoris topology $\tau_{V}$ and the Fell topology
$\tau_{F}$ respectively.

Let $X$ be a space. For each $n\in\mathbb{N}$, let
$\mathcal{F}_n(X)=\{A\subset X: 1\leq|A|\leq n\}$ and $\mathcal{F}(X)=\{A\subset X: A\neq\emptyset\ \mbox{and}\ A\ \mbox{is finite}\};$
 moreover, let
$$CL(X)=\{A\subset X: A \ \mbox{is nonempty closed in}\ X\}$$ and $$\mathcal{K}(X)=\{K\subset X: K\ \mbox{is nonempty compact}\}.$$

\smallskip
If $\mathcal{U}$ is a nonemptyset family of subsets of a space $X$, let
$$\mathcal{U}^-=\{A\in CL(X): A\cap U\neq\emptyset, U\in
\mathcal{U}\}$$ and $$\mathcal{U}^+=\{A\in CL(X): A\subset
\bigcup\mathcal{U}\};$$in particular, if $\mathcal{U}=\{U\}$, then we denote $\mathcal{U}^-$ and $\mathcal{U}^+$ by $U^-$ and $U^+$ respectively.

For any finite family $\mathcal{U}=\{U_1, \ldots, U_k\}$ of open subsets of space $X$, let $$\langle\mathcal{U}\rangle=\langle
U_1, ..., U_k\rangle =\{H\in CL(X): H\subset {\bigcup}_{i=1}^k U_i\
\mbox{and}\ H\cap U_j\neq \emptyset, 1\leq j\leq k\}.$$ We endow
$CL(X)$ with {\it Vietoris topology} defined as the topology
generated by $$\{\langle U_1, \ldots, U_k\rangle: U_1, \ldots, U_k\
\mbox{are open subsets of}\ X, k\in \mathbb{N}\}.$$  We endow
$CL(X)$ with {\it Fell topology} defined as the topology generated
by the following two families as a subbase:
$$\{U^{-}: U\ \mbox{is any non-empty open in}\ X\}$$ and $$\{(K^{c})^{+}: K\ \mbox{is a compact subset of}\ X, K\neq X\}.$$
The {\it locally finite topology} $\tau_{loc fin}$
on $CL(X)$ has a subbase consisting of all subsets of the forms $V^+$
and $\mathcal{U}^-$, where $V$ ranges over all open subsets of $X$
and $\mathcal{U}$ range over all the locally finite families of open
subsets of $X$. We denote the hyperspace $CL(X)$ with the Vietoris topology, the Fell
topology and the locally finite topology by $(CL(X), \tau_{V})$, $(CL(X), \tau_{F})$ and $(CL(X), \tau_{lf})$
respectively.

\smallskip
In this paper, we mainly consider the following four questions.

\begin{question}\label{q0}
How to characterize a metrizable space $X$ such that $(CL(X), \tau_V)$ has an $\omega^\omega$-base?
\end{question}

\smallskip
\begin{question}\label{q1}
How to characterize a metrizable space $X$ such that $(CL(X), \tau_F)$ has an $\omega^\omega$-base?
\end{question}

\smallskip
\begin{question}\label{q2}
How to characterize a space $X$ such that free Abelian topological group $A(X)$ has an $\omega^\omega$-base?
\end{question}

A space $X$ is said to be {\it Fr\'{e}het-Urysohn } if for each point $x\in X$ and each set $A\subset X$ with $x\in \overline{A}$ there is a sequence $(a_{n})_{n\in\omega}$ in $A$ converging to $x$. In \cite{GKL2015}, Gabriyelyan, K\c{a}kol and Leiderman proved that each Fr\'{e}het-Urysohn topological group having an $\omega^\omega$-base is first-countable. Therefore, the following question is interesting.

\smallskip
\begin{question}\label{q4}
For a space $X$, if $(CL(X), \tau_F)$ (resp., $(CL(X), \tau_V)$) is a Fr\'{e}het-Urysohn space having an $\omega^\omega$-base, is $(CL(X), \tau_F)$ (resp., $(CL(X), \tau_V)$) first-countable?
\end{question}

In this paper, we first give an answer for Question~\ref{q2} in Section 2, and prove that the free Abelian topological group $A(X)$ over a Tychonoff space is a $k$-space with an $\omega^\omega$-base if and only if $X$ is a topological sum of a discrete space and a submetrizable $k_\omega$-space. In Section 3, we give full answers for Questions~\ref{q0} and~\ref{q1}, see Theorems~\ref{tt1} and~\ref{tt2}. Moreover, we give an affirmative answer for Question~\ref{q4} in the case of Fell topology, see Theorem~\ref{tt4}.

Next, we give some necessary notations and terminologies.

Let $X$ be a topological space and $P \subseteq X$ be a subset of $X$. The \emph{closure} of $P$ in $X$ is denoted by $\overline{P}$, and $\partial P$ denote the boundary of $P$ in $X$. The set $P$ is called a {\it sequential neighborhood} \cite{FR1965} of $x$ in $X$, if each sequence
converging to $x\in X$ is eventually in $P$. A subset
$U$ of $X$ is called {\it sequentially open} if $U$ is a sequential neighborhood of each of
its points. The space $X$ is called a {\it sequential space} if each sequentially open subset of $X$ is
open. The space $X$ is called a \emph{$k$-space} provided that a subset $C\subseteq X$ is closed in $X$ if $C\cap K$ is closed in $K$ for each compact subset $K$ of $X$.
If there exists a family of countably many compact subsets $\{K_{n}: n\in\mathbb{N}\}$ of
$X$ such that each subset $F$ of $X$ is closed in $X$ provided that $F\cap K_{n}$ is closed in $K_{n}$ for each $n\in\mathbb{N}$, then $X$ is called a \emph{$k_{\omega}$-space}. Note that every $k_{\omega}$-space is a $k$-space.

\begin{definition}
 Let $\mathcal{P}$ be a family of subsets of
a space $X$. Then $\mathcal{P}$ is called a {\it $cs^{\ast}$-network} \cite{LT1994}
of $X$ if whenever a sequence $\{x_{n}\}$ converges to $x\in U\in\tau
(X)$, there is a $P\in\mathcal{P}$ and a subsequence $\{x_{n_{i}}: i\in \mathbb{N}\}\subset\{x_{n}: n\in \mathbb{N}\}$ such that $\{x\}\cup\{x_{n_{i}}: i\in \mathbb{N}\}\subset P\subset U$.
\end{definition}

\begin{definition}
For a topological space $X$ its {\it free topological group} (resp., {\it free Abelian topological group, free locally convex space}) is a pair $(F(X), h_{X})$ (resp., $(A(X), h_{X})$, $(L(X), h_{X}))$ consisting
of a topological group $F(X)$ (resp., topological Abelian group $A(X)$, locally convex topological vector space $L(X)$) and a continuous map
$h_{X}: X\rightarrow F(X)$ (resp, $h_{X}: X\rightarrow A(X)$,  $h_{X}: X\rightarrow L(X)$) such that for every continuous map $f: X\rightarrow G$ into a topological group (resp., topological Abelian group, locally convex topological vector space) $G$ there exists a continuous homomorphic $\widetilde{f}: F(X)\rightarrow G$ (resp., continuous homomorphic $\widetilde{f}: A(X)\rightarrow G$, continuous linear operator $\widetilde{f}: L(X)\rightarrow G$) such that $\widetilde{f}\circ h_{X}=f$.
\end{definition}

Let $X$ be a space and $\kappa$ be an infinite cardinal. For each $\alpha\in\kappa$, let $T_{\alpha}$ be a sequence converging to
  $x_{\alpha}\not\in T_{\alpha}$ in $X$. Let $T=\bigoplus_{\alpha\in\kappa}(T_{\alpha}\cup\{x_{\alpha}\})$ be the topological sum of $\{T_{\alpha}
  \cup \{x_{\alpha}\}: \alpha\in\kappa\}$. Then
  $S_{\kappa} =\{\infty\}  \cup \bigcup_{\alpha\in\kappa}T_{\alpha}$
  is the quotient space obtained from $T$ by
  identifying all the points $x_{\alpha}\in T$ to the point $\infty$.

\maketitle
\section{free Abelian topological groups with an $\omega^\omega$-base}
In this section, we mainly characterise the space $X$ such that the free Abelian topological groups has an $\omega^\omega$-base. Indeed, Lin, Ravsky and Zhang in \cite{LRZ2020} proved the following theorem.

\begin{theorem}\cite[Theorem 4.6]{LRZ2020}
For a space $X$, the free topological group $F(X)$ is a $k$-space with an $\omega^\omega$-base if and only if $X$ is either countable discrete or a submetrizable $k_{\omega}$-space.
\end{theorem}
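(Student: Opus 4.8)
The plan is to establish the equivalence by transporting topological properties across the free-group functor $F$, using two structural facts as black boxes: that a compact space with an $\omega^\omega$-base is metrizable, and that a topological group which is a $k$-space and carries an $\omega^\omega$-base is either metrizable or a $k_\omega$-space. Throughout I use that for Tychonoff $X$ the canonical map $h_X\colon X\to F(X)$ is a closed topological embedding, so $X$ is a closed subspace of $F(X)$; since having an $\omega^\omega$-base is hereditary and being a $k$-space passes to closed subspaces, every hypothesis on $F(X)$ descends to $X$. I also record that, for a $k_\omega$-space, submetrizability is equivalent to all compact subsets being metrizable: one direction is automatic, and for the converse one uses normality of the $\sigma$-compact space to extend countably many point-separating continuous functions off each compactum of a defining sequence, and then diagonalises them into a continuous injection into $\mathbb{R}^\omega$.

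For sufficiency I argue in two cases. If $X$ is countable discrete, then $F(X)$ is a countable discrete group, which is trivially a $k$-space and carries the constant base $U_\alpha[e]=\{e\}$, hence an $\omega^\omega$-base. If $X$ is a submetrizable $k_\omega$-space, I first invoke the classical fact that $F(Y)$ is a $k_\omega$-space whenever $Y$ is, so that $F(X)$ is $k_\omega$ and in particular a $k$-space. Next, every compact subset of $F(X)$ is contained in the set of reduced words of bounded length with letters from a single compact subset of $X$; since the compacta of $X$ are metrizable and a Hausdorff continuous image of a compact metrizable space is metrizable, all compacta of $F(X)$ are metrizable. It then remains to build an $\omega^\omega$-base on a $k_\omega$-group all of whose compacta are metrizable. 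This is the core of the sufficiency: writing $F(X)=\varinjlim K_m$ with each $K_m$ compact metrizable and fixing at the identity a decreasing countable base $\{B^m_k\}_k$ inside each $K_m$, one assembles for each $\alpha\in\omega^\omega$ a neighbourhood of $e$ out of the data $\{B^m_{\alpha(m)}\}_m$, monotonicity in $\alpha$ being inherited coordinatewise; homogeneity then propagates the base to every point.

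For necessity, assume $F(X)$ is a $k$-space with an $\omega^\omega$-base. The dichotomy quoted above splits the argument: $F(X)$ is either metrizable or a $k_\omega$-space. If $F(X)$ is metrizable, then $X$ is discrete, since the free topological group over a non-discrete space is never metrizable; a further argument then forces $X$ to be countable, so $X$ is countable discrete. If $F(X)$ is a $k_\omega$-space, then the closed subspace $X$ is $k_\omega$, and each compact subset of $X$, being a compact subset of $F(X)$, inherits an $\omega^\omega$-base and is therefore metrizable by the compact case. Hence $X$ is a $k_\omega$-space all of whose compacta are metrizable, that is, a submetrizable $k_\omega$-space. Together the two cases yield exactly the claimed characterization.

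The principal obstacle is the necessity direction, and inside it the structural dichotomy that a $k$-space group with an $\omega^\omega$-base is metrizable or $k_\omega$: this is where the $\omega^\omega$-base at the identity must be exploited to rule out any behaviour intermediate between first-countable and $\sigma$-compact groups, and it is the step that genuinely needs the theory of $\omega^\omega$-bases rather than formal bookkeeping. A second delicate point, on the sufficiency side, is the explicit construction of the $\omega^\omega$-base on the $k_\omega$-group $F(X)$: the $\alpha$-indexed sets must be arranged to be open in the direct-limit topology, not merely neighbourhoods on each $K_m$ in isolation, which is where the compatibility between the metric bases across the filtration does the real work. Finally, securing the countability in the metrizable subcase is the subtle boundary of the theorem, being precisely what distinguishes the countable-discrete alternative from the submetrizable $k_\omega$ one.
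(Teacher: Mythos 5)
First, a point of order: the paper itself contains no proof of this statement---it is quoted verbatim from \cite{LRZ2020}---so the only internal benchmark is the paper's proof of the Abelian analogue (Theorem~\ref{tt3}), and measured against that your argument has two genuine gaps. The structural dichotomy you use as a black box, ``a topological group that is a $k$-space with an $\omega^\omega$-base is either metrizable or a $k_\omega$-space,'' is false. The correct result (\cite[Corollary 3.13]{GKL2015}, the one the paper actually invokes for $A(X)$) has a third alternative: the group may instead contain an \emph{open} submetrizable $k_\omega$-subgroup, possibly of uncountable index, and this alternative genuinely occurs: the paper's own example $A(C\oplus D)$ ($C$ a nontrivial convergent sequence, $D$ uncountable discrete) is a $k$-space group with an $\omega^\omega$-base that is neither metrizable nor $k_\omega$ (it is not even Lindel\"of). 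Ruling out this third alternative for the non-Abelian group $F(X)$ is precisely the hard, free-group-specific content of necessity: one must exploit non-commutativity, e.g.\ show that if $X$ splits off an uncountable discrete part $D$ together with a nontrivial convergent sequence $x_n\to x$, then the conjugates $d\,x_nx^{-1}d^{-1}$, $d\in D$, form a copy of $S_{\omega_1}$ in $F(X)$, contradicting \cite[Corollary 8.2.3]{B2017} (this mirrors the $S_{\omega_1}$ argument in the paper's proof of Theorem~\ref{tt3}; in $A(X)$ the conjugates collapse, which is exactly why the Abelian characterization is weaker). Since, for free groups, the dichotomy you assume is essentially equivalent to the necessity being proved, assuming it is close to circular.

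Second, the ``further argument'' that is supposed to force $X$ to be countable in the metrizable subcase does not exist: for \emph{every} discrete $X$, countable or not, $F(X)$ is the abstract free group with the discrete topology (the discrete free group already satisfies the universal property), hence metrizable and trivially a $k$-space with an $\omega^\omega$-base. So no argument can yield countability; in fact this shows the statement as quoted fails for uncountable discrete $X$, and the characterization a correct proof can target is ``$X$ is discrete or a submetrizable $k_\omega$-space''---which is also what the paper's own remark contrasting $F(X)$ with $A(C\oplus D)$ presupposes. You flagged this step yourself as ``the subtle boundary of the theorem''; it is not subtle but impossible, and it is the clearest sign the proof strategy needs repair. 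A smaller issue: in the sufficiency, assembling relative bases $\{B^m_{\alpha(m)}\}_m$ along the filtration $K_m$ does not by itself produce neighborhoods in the direct-limit topology; the standard fix uses the group operation (sets of the form $\bigcup_{n}W_1W_2\cdots W_n$ with each $W_i$ a neighborhood of $e$), or one simply cites \cite[Theorem 1.5]{GKL2015}, that every submetrizable $k_\omega$ group has an $\omega^\omega$-base, exactly as the paper does in proving Theorem~\ref{tt3}.
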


They also pointed out in \cite{LRZ2020} that $A(C\oplus D)$ is a $k$-space with an $\omega^\omega$-base, where $C$ is a non-trivial convergent sequence with its limit point and $D$ is an uncountable discrete space; however, $C\oplus D$ is neither discrete nor Lindel\"{o}f. The following Theorem~\ref{tt3} provides a characterization of a space $X$ such that the free Abelian topological group $A(X)$ is a $k$-space with an $\omega^\omega$-base, which gives some partial answers for Question~\ref{q2} and \cite[Questions 4.15 and 4.17]{GKL2015} respectively.

\begin{theorem}\label{tt3}
Let $A(X)$ be an free Abelian topological group over a space $X$. Then $A(X)$ is a $k$-space with an $\omega^\omega$-base if and only if $X$ is a topological sum of a discrete space and a submetrizable $k_\omega$-space.
\end{theorem}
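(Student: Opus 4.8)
The plan is to prove the two implications by very different means: sufficiency reduces to a product decomposition of $A(X)$, while necessity requires extracting a topological splitting of $X$ from the interaction of the $k$-space property with the $\omega^\omega$-base.

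\emph{Sufficiency.} Suppose $X = D \oplus M$ with $D$ discrete and $M$ a submetrizable $k_\omega$-space. The first ingredient is the decomposition $A(Y \oplus Z) \cong A(Y) \times A(Z)$, valid for all $Y, Z$: the two retractions $Y \oplus Z \to A(Y)$ and $Y \oplus Z \to A(Z)$ extend to a continuous homomorphism $A(Y \oplus Z) \to A(Y) \times A(Z)$, while the continuous inclusions $A(Y), A(Z) \hookrightarrow A(Y \oplus Z)$ assemble into a continuous inverse $A(Y) \times A(Z) \to A(Y \oplus Z)$, $(u,v) \mapsto u + v$ --- and this map is a homomorphism \emph{precisely because the group is abelian}. (It is the failure of this step in the non-abelian case that forces $F(X)$ to require $X$ to be wholly discrete or wholly a submetrizable $k_\omega$-space, with no mixing, as in \cite[Theorem~4.6]{LRZ2020}.) Second, $A(D)$ is discrete: the discrete topology on the free abelian group $\mathbb{Z}^{(D)}$ makes every homomorphism into a topological group continuous and hence satisfies the defining universal property, so by uniqueness it is $A(D)$. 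Third, $A(M)$ is a $k$-space with an $\omega^\omega$-base: by \cite[Theorem~4.6]{LRZ2020} the submetrizable $k_\omega$-space $M$ makes $F(M)$ a $k$-space with an $\omega^\omega$-base, and the abelianization $F(M) \to A(M)$ is an open continuous surjective homomorphism, under which both properties pass to the quotient. Combining these, $A(X) \cong A(M) \times A(D)$ is the product of a $k$-space with a discrete (hence locally compact Hausdorff) group, and is therefore a $k$-space \cite{E1989}; and the diagonal family $\{U_\alpha \times V_\alpha : \alpha \in \omega^\omega\}$ formed from $\omega^\omega$-bases of the two factors is an $\omega^\omega$-base at the identity.

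\emph{Necessity, and the main obstacle.} Assume $A(X)$ is a $k$-space with an $\omega^\omega$-base. Since $h_X\colon X \to A(X)$ is a closed embedding and a group $\omega^\omega$-base induces one at every point, $X$ inherits an $\omega^\omega$-base (these are hereditary) and is a $k$-space (closed subspaces of $k$-spaces are $k$-spaces). As a compact space with an $\omega^\omega$-base is metrizable \cite{GKL2015}, every compact subset of $X$ is metrizable. I would then reduce the whole problem to a single clopen statement: \emph{it suffices to produce a clopen $k_\omega$-subspace $M$ of $X$ containing every non-isolated point}. Indeed, such an $M$ is a countable union of metrizable compacta, hence has a countable network, hence is cosmic and so submetrizable; and $D := X \setminus M$ is then clopen and, consisting only of isolated points, discrete, giving $X = D \oplus M$. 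The entire difficulty is thus concentrated in producing this clopen $\sigma$-compact hull of the non-isolated set $N$. Here the $\omega^\omega$-base on $X$ alone is \emph{insufficient}: it does not rule out an uncountable fan such as $S_{\omega_1}$, whose unique non-isolated point has no $\sigma$-compact clopen neighborhood, yet whose character may equal $\mathfrak{d}$. One must instead use that it is $A(X)$, not merely $X$, that is a $k$-space with an $\omega^\omega$-base: the plan is to show that such a group has an \emph{open} $k_\omega$ subgroup $H$ with discrete quotient, and then set $M := h_X^{-1}(H)$, which is clopen and, being closed in the $k_\omega$ group $H$, is itself $k_\omega$; since every convergent sequence of $X$ lies in $H$, $M$ contains $N$ as required. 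I expect the existence of this open $k_\omega$ subgroup to be the crux, proved by contradiction: if $N$ admitted no $\sigma$-compact clopen hull, one could locate inside $A(X)$ a closed copy of $A(S_{\omega_1})$ (or a continuous quotient onto it), and I would then show directly that $A(S_{\omega_1})$ is not a $k$-space with an $\omega^\omega$-base, contradicting the hypothesis.
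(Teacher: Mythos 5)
Your sufficiency half is correct and follows essentially the paper's route ($A(D\oplus M)\cong A(D)\times A(M)$, $A(D)$ discrete, $A(M)$ a submetrizable $k_\omega$-group, products preserving both properties); you even verify the $k$-space part more explicitly than the paper does. The necessity half, however, contains a step that is false, and that step conceals the actual crux of the theorem. Given an open submetrizable $k_\omega$-subgroup $H\leq A(X)$, you set $M:=h_X^{-1}(H)=X\cap H$ and claim $M$ contains every non-isolated point of $X$ ``since every convergent sequence of $X$ lies in $H$.'' It does not: if $x_n\to x$ in $X$, then $x_n-x\to 0$ lies eventually in $H$, so the sequence lies eventually in the \emph{coset} $x+H$, which is disjoint from $H$ unless $x\in H$. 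Concretely, let $X=\{x\}\cup\{x_n: n\in\omega\}$ be a nontrivial convergent sequence; then $A(X)$ is a submetrizable $k_\omega$-group, and the kernel $H$ of the mod-$2$ degree homomorphism $A(X)\to \mathbb{Z}/2\mathbb{Z}$ (the degree map extends the constant map $X\to\{1\}\subset\mathbb{Z}$) is an open submetrizable $k_\omega$-subgroup with $X\cap H=\emptyset$, so your $M$ is empty while a non-isolated point exists. The repair is what the paper does: write $A(X)=\oplus_{\alpha\in\Gamma}G_\alpha$ as the topological sum of \emph{all} cosets of $H$, so that $X=\oplus_{\alpha\in\Gamma}(X\cap G_\alpha)$ with each piece closed in $G_\alpha\cong H$, hence submetrizable $k_\omega$. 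Then the real difficulty appears, which your outline never confronts: one must prove that only countably many pieces $X\cap G_\alpha$ are non-discrete, since an uncountable union of non-discrete clopen pieces is not Lindel\"of and so can never be absorbed into a $k_\omega$ summand. The paper proves this countability by choosing a nontrivial convergent sequence $x_n(\alpha)\to x(\alpha)$ in each non-discrete piece and translating it to $0$: the set $\{x_n(\alpha)-x(\alpha): n\in\mathbb{N}, \alpha\in\omega_1\}\cup\{0\}$ is a copy of $S_{\omega_1}$ inside $A(X)$, contradicting \cite[Corollary 8.2.3]{B2017}. Note that this fan exists only after translation inside the group; in $X$ itself those sequences form a metrizable topological sum, so no argument confined to $X$ can see this obstruction.

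There is a second, structural gap. The dichotomy you defer to the end --- $A(X)$ is metrizable (whence $X$ is discrete) or contains an open submetrizable $k_\omega$-subgroup --- is exactly \cite[Corollary 3.13]{GKL2015}, which the paper simply cites; it needs no new proof. But the proof you sketch for it would not work. You propose: if the non-isolated set $N$ has no clopen $\sigma$-compact hull, then $A(X)$ contains a closed copy of (or quotients onto) $A(S_{\omega_1})$. First, $\sigma$-compact is too weak for your own reduction ($\mathbb{Q}$ is $\sigma$-compact but not $k_\omega$, and the $Y$ in the theorem must be $k_\omega$). Second, the implication fails: for $X=\mathbb{Q}$ the non-isolated set is all of $\mathbb{Q}$, which admits no clopen $k_\omega$ hull, yet $A(\mathbb{Q})$ has a countable network, so it contains no copy of $S_{\omega_1}$, hence none of $A(S_{\omega_1})$, nor any continuous surjection onto it. The genuine obstruction for $\mathbb{Q}$ is that $A(\mathbb{Q})$ fails to be a $k$-space --- precisely the hypothesis that \cite[Corollary 3.13]{GKL2015} exploits and that an $S_{\omega_1}$-type argument cannot detect. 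In short: cite the dichotomy, decompose $X$ over all cosets of the open subgroup, and prove the countability of the non-discrete pieces via the translated fan; that combination is the missing core of the necessity proof.
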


\begin{proof}
Necessity. Assume that $X=D\oplus Y$, where $D$ is discrete and $Y$ is a submetrizable $k_\omega$-subspace. Then $A(X)$ is topologically homeomorphic to $A(D)\times A(Y)$. Since $A(D)$ is discrete and $A(Y)$ is a submetrizable $k_\omega$-space, it follow that both $A(D)$ and $A(Y)$ have $\omega^\omega$-bases by \cite[Theorem 1.5]{GKL2015}. Then $A(D)\times A(Y)$ has an $\omega^\omega$-base; therefore, $A(X)$ has an $\omega^\omega$-base.

Sufficiency. Assume that $A(X)$ is a $k$-space with an $\omega^\omega$-base. By \cite[Corollary 3.13]{GKL2015}, $A(X)$ is metrizable or contains a submetrizable open $k_\omega$-subgroup. If $A(X)$ is metrizable, then $X$ is discrete. If $A(X)$ is not metrizable, then $A(X)$ contains a submetrizable open $k_\omega$-subgroup, hence $A(X)=\oplus_{\alpha\in \Gamma}G_\alpha$, where each $G_\alpha$ is a submetrizable $k_\omega$-space. Since $X$ is a closed subset of $A(X)$ and $X=X\cap A(X)=\oplus_{\alpha\in \Gamma}(X\cap G_\alpha)$, we conclude that each $X\cap G_\alpha$ is a submetrizable $k_\omega$-space. Put $$\Gamma_{1}=\{\alpha\in \Gamma: X\cap G_\alpha\ \mbox{is not discrete}\}.$$Now it suffices to prove the set $\Gamma_{1}$ is countable. Indeed, if the set $\Gamma_{1}$ is countable, then we can rewrite $\Gamma_{1}=\{X\cap G_{\alpha_i}: i\in \mathbb{N}\}$; hence $X=D\oplus (\oplus_{i\in\mathbb{N}}(X\cap G_{\alpha_i}))$, where $D$ is discrete and each $X\cap G_{\alpha_i}$ is a non-discrete $k_\omega$-space.

Suppose that $\Gamma_{1}$ is not countable, next we shall give a contradiction. Indeed, we may assume that $X$ contains a subspace $\oplus_{\alpha\in \omega_1} (X\cap G_\alpha)$, where each $X\cap G_\alpha$ is not discrete. Obviously, each $X\cap G_\alpha$ is sequential. Hence, for each $\alpha\in \omega_1$, there is a nontrivial sequence $$\{x(\alpha)\}\cup\{x_n(\alpha): n\in \mathbb{N}\}\subset X\cap G_\alpha$$ with $x_n(\alpha)\to x(\alpha)$ as $n\rightarrow\infty$; then $x_n(\alpha)-x(\alpha)\to 0$ as $n\to \infty$. Put $$Z=\{x_n(\alpha)-x(\alpha): n\in\mathbb{N}, \alpha\in \omega_1\}\cup\{0\}.$$ Then it is easy to see that $Z$ is a copy of $S_{\omega_1}$, which shows that $S_{\omega_1}$ has an $\omega^\omega$-base. This contradicts to \cite[Corollary 8.2.3]{B2017}.
\end{proof}

In \cite{GKL2015}, Gabriyelyan, K\c{a}kol and Leiderman posed the following question.

\begin{question}\cite[Question 4.19]{GKL2015}\label{q3}
Let $X$ be a space. If $A(X)$ has an $\omega^\omega$-base, does the free locally convex space $L(X)$ have an $\omega^\omega$-base?
\end{question}

By Theorem~\ref{tt3} and \cite[Corollary 8.3]{BL2016}, $A(X)$ has an $\omega^\omega$-base and $L(X)$ does not have an $\omega^\omega$-base for any space $X$ which is a topological sum of a uncountable discrete space $D$ with a metrizable $k_\omega$-space $Y$, which gives a negative answer to Question~\ref{q3}. However, by \cite[Corollary 8.3]{BL2016}, it follows that, for each metrizable space $X$, the free locally convex space $L(X)$ has an $\omega^\omega$-base if and only if $X$ is $\sigma$-compact. Therefore, $A(\mathbb{Q})$ has an $\omega^\omega$-base, but $A(\mathbb{Q})$ is not a $k$-space by Theorem~\ref{tt3}. Hence the assumption ``$k$-space'' in Theorem~\ref{tt3} is not a necessary condition such that $A(X)$ has an $\omega^\omega$-base. However, we have the following two corollaries.

\begin{corollary}\label{cc0}
Let $X$ be a locally compact separable metrizable space. Then $A(X)$, $F(X)$ and $L(X)$ have $\omega^\omega$-bases respectively.
\end{corollary}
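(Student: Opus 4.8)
The plan is to reduce all three assertions to the characterization results already at our disposal, by recognizing that a locally compact separable metrizable space $X$ is simultaneously a submetrizable $k_\omega$-space and a $\sigma$-compact metrizable space. Once this dual identification is made, each of $A(X)$, $F(X)$ and $L(X)$ is handled by a single citation.

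First I would verify that $X$ is $\sigma$-compact. Being separable and metrizable, $X$ is second countable and hence Lindel\"of; combining this with local compactness in the usual way (cover $X$ by open sets with compact closures and extract a countable subcover) shows $X=\bigcup_{n\in\mathbb{N}}C_n$ for some compact sets $C_n$. Next I would upgrade this to the $k_\omega$-property: a locally compact $\sigma$-compact Hausdorff space admits a compact exhaustion $K_1\subseteq K_2\subseteq\cdots$ with $K_n\subseteq\mathrm{int}\,K_{n+1}$ and $X=\bigcup_{n\in\mathbb{N}}K_n$, and any subset of $X$ meeting each $K_n$ in a relatively closed set is closed, so $\{K_n:n\in\mathbb{N}\}$ witnesses that $X$ is a $k_\omega$-space. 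Since $X$ is metrizable it is a fortiori submetrizable; thus $X$ is a submetrizable $k_\omega$-space.

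With these two facts in hand the conclusions follow directly. Regarding $X$ as the topological sum of the empty discrete space with the submetrizable $k_\omega$-space $X$, Theorem~\ref{tt3} gives that $A(X)$ is a $k$-space with an $\omega^\omega$-base, so in particular $A(X)$ has an $\omega^\omega$-base. Because $X$ is a submetrizable $k_\omega$-space, \cite[Theorem 4.6]{LRZ2020} shows that $F(X)$ is a $k$-space with an $\omega^\omega$-base. Finally, since $X$ is metrizable and $\sigma$-compact by the first step, \cite[Corollary 8.3]{BL2016} yields that $L(X)$ has an $\omega^\omega$-base.

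I would not expect any genuine obstacle here, since the entire content lies in the single observation that the hypothesis ``locally compact $+$ separable metrizable'' simultaneously supplies the submetrizable $k_\omega$-structure required for the group cases and the $\sigma$-compactness required for the locally convex case. The only point demanding mild care is the construction of the compact exhaustion $\{K_n\}$ realizing the $k_\omega$-property, which is routine for locally compact $\sigma$-compact spaces.
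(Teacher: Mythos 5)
Your proof is correct and takes essentially the same route as the paper: both arguments reduce the corollary to Theorem~\ref{tt3} for $A(X)$, the Lin--Ravsky--Zhang characterization of free groups for $F(X)$, and \cite[Corollary 8.3]{BL2016} for $L(X)$, after observing that a locally compact separable metrizable space is a ($\sigma$-compact, submetrizable) $k_\omega$-space. The only cosmetic differences are that you spell out the $\sigma$-compactness and $k_\omega$ verifications explicitly, cite \cite[Theorem 4.6]{LRZ2020} where the paper cites \cite[Theorem 4.4]{LRZ2020}, and omit the paper's appeal to \cite[Theorems 7.6.30 and 7.6.36]{AT2008}, which supplies the $k$-space property of $F(X)$ and $A(X)$ but is not needed for the $\omega^\omega$-base conclusion alone.
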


\begin{proof}
Since each locally compact separable metrizable space is a $k_{\omega}$-space, it follows from \cite[Theorems 7.6.30 and 7.6.36]{AT2008} that $F(X)$ and $A(X)$ are $k$-spaces. Now we apply Theorem~\ref{tt3}, \cite[Corollary 8.3]{BL2016} and \cite[Theorem 4.4]{LRZ2020}, the results are desired.
\end{proof}

The following corollary is obvious by Corollary~\ref{cc0}.

\begin{corollary}\label{c0}
For any $n\in\mathbb{N}$, $F(\mathbb{R}^{n})$, $A(\mathbb{R}^{n})$ and $L(\mathbb{R}^{n})$ have $\omega^\omega$-bases, where $\mathbb{R}$ is the real number space.
\end{corollary}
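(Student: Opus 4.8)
The plan is to observe that $\mathbb{R}^n$ is itself a locally compact separable metrizable space for every $n\in\mathbb{N}$, so that Corollary~\ref{cc0} applies verbatim and delivers the three $\omega^\omega$-bases simultaneously. In other words, the entire content of the corollary is the recognition that Euclidean space is a member of the class treated in the preceding corollary.

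First I would confirm the three topological hypotheses that feed into Corollary~\ref{cc0}. The Euclidean metric $d(x,y)=\bigl(\sum_{i=1}^{n}(x_i-y_i)^2\bigr)^{1/2}$ induces the usual topology on $\mathbb{R}^n$, so $\mathbb{R}^n$ is metrizable. The set $\mathbb{Q}^n$ is countable and dense, so $\mathbb{R}^n$ is separable. Finally, by the Heine--Borel theorem every closed ball $\overline{B}(x,r)$ is compact, hence each point of $\mathbb{R}^n$ has a compact neighborhood and the space is locally compact.

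Having verified these hypotheses, I would then quote Corollary~\ref{cc0} directly: it asserts that for any locally compact separable metrizable space $X$, the free topological group $F(X)$, the free Abelian topological group $A(X)$, and the free locally convex space $L(X)$ all have $\omega^\omega$-bases. Applying this with $X=\mathbb{R}^n$ yields that $F(\mathbb{R}^n)$, $A(\mathbb{R}^n)$, and $L(\mathbb{R}^n)$ have $\omega^\omega$-bases, which is exactly the claim.

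There is no genuine obstacle in this argument, since the statement is a pure specialization of Corollary~\ref{cc0} and the only labor is the routine check that $\mathbb{R}^n$ satisfies its three hypotheses. Accordingly, I expect the formal proof to collapse to a single citation of Corollary~\ref{cc0}, which is consistent with the author's remark that the result is obvious from that corollary.
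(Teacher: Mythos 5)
Your proposal is correct and takes exactly the paper's route: the paper derives Corollary~\ref{c0} as an immediate consequence of Corollary~\ref{cc0}, which is precisely your single citation, with your verification that $\mathbb{R}^n$ is locally compact, separable, and metrizable being the (routine) content left implicit by the paper.
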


\maketitle
\section{The hyperspace $CL(X)$ with an $\omega^\omega$-base}
In this section, we mainly characterise metrizable space $X$ such that $(CL(X), \tau_V)$ and $(CL(X), \tau_F)$ have an $\omega^\omega$-base respectively. First, we give a characterization of a space $X$ such that $(\mathcal{F}(X), \tau_V)$ has an $\omega^\omega$-base.

\begin{proposition}\label{pr6}
Let $X$ be a space. Then $(\mathcal{F}(X), \tau_V)$ has an $\omega^\omega$-base if and only if $X$ has an $\omega^\omega$-base.
\end{proposition}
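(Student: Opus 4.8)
The plan is to prove the two implications separately, with essentially all of the content living in the sufficiency direction. For the necessity, I would exploit the canonical copy of $X$ sitting inside $\mathcal{F}(X)$. The map $i\colon X\to\mathcal{F}(X)$, $x\mapsto\{x\}$, carries $X$ homeomorphically onto the subspace $\mathcal{F}_1(X)=\{\{x\}:x\in X\}$: indeed $i^{-1}(\langle U_1,\dots,U_k\rangle)=\bigcap_{j=1}^{k} U_j$ is open, which gives continuity, while $\langle U\rangle\cap\mathcal{F}_1(X)=\{\{x\}:x\in U\}=i(U)$ shows that $i$ is open onto its image. Since an $\omega^\omega$-base restricts to an $\omega^\omega$-base on any subspace (intersect each $U_\alpha[y]$ with the subspace: both monotonicity and the base property are preserved under this operation), an $\omega^\omega$-base on $\mathcal{F}(X)$ yields one on $\mathcal{F}_1(X)\cong X$.

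For the sufficiency, fix $A=\{x_1,\dots,x_n\}\in\mathcal{F}(X)$ with the $x_i$ distinct, and let $\{U_\alpha[x_i]:\alpha\in\omega^\omega\}$ be a monotone neighborhood base at each $x_i$. I would set $U_\alpha[A]=\langle U_\alpha[x_1],\dots,U_\alpha[x_n]\rangle$ and claim this family is the desired $\omega^\omega$-base at $A$. Monotonicity is immediate from the componentwise inclusions $U_\beta[x_i]\subset U_\alpha[x_i]$ for $\alpha\le\beta$, together with the elementary observation that $\langle V_1,\dots,V_n\rangle\subset\langle W_1,\dots,W_n\rangle$ whenever $V_i\subset W_i$ for all $i$ (both defining conditions of $\langle\cdot\rangle$ are preserved under enlarging the sets).

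The heart of the argument is to verify that this family is a neighborhood base. Given any basic Vietoris neighborhood $\langle U_1,\dots,U_k\rangle$ of $A$, I would first reduce it to canonical form: setting $V_i=\bigcap\{U_j:x_i\in U_j\}$, one checks that $\langle V_1,\dots,V_n\rangle\subset\langle U_1,\dots,U_k\rangle$, using $A\cap U_j\neq\emptyset$ to see that each $U_j$ contains some $V_i$ and hence inherits the meeting condition. Then, choosing $\alpha_i$ with $U_{\alpha_i}[x_i]\subset V_i$ and taking the componentwise maximum $\alpha=\max_i\alpha_i\in\omega^\omega$, monotonicity of each base gives $U_\alpha[x_i]\subset V_i$ and therefore $U_\alpha[A]\subset\langle V_1,\dots,V_n\rangle\subset\langle U_1,\dots,U_k\rangle$. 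I expect this reduction-to-canonical-form step to be the only place demanding genuine care, since everything else is formal; note that the finiteness of $A$ is precisely what legitimizes both the canonical reduction and the maximum $\max_i\alpha_i$, so I would not expect the argument to extend verbatim to $\mathcal{K}(X)$ or $CL(X)$, which is consistent with the more delicate characterizations needed for those hyperspaces.
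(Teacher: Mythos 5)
Your proposal is correct and takes essentially the same route as the paper: the necessity via the embedding $x\mapsto\{x\}$ onto $\mathcal{F}_1(X)$ together with heredity of $\omega^\omega$-bases (which the paper dismisses as obvious), and the sufficiency via the neighborhoods $\langle U_\alpha[x_1],\dots,U_\alpha[x_n]\rangle$ with a componentwise maximum of finitely many indices $\alpha_i\in\omega^\omega$. The only noteworthy difference is that you explicitly justify the reduction of an arbitrary basic neighborhood $\langle U_1,\dots,U_k\rangle$ to the canonical form $\langle V_1,\dots,V_n\rangle$ with $V_i=\bigcap\{U_j:x_i\in U_j\}$ (and without needing the $V_i$ pairwise disjoint), a step the paper covers only by a ``without loss of generality''.
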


\begin{proof}
Obviously, it suffices to prove the necessity. Assume that $X$ has an $\omega^\omega$-base. Take any ${\bf x}=\{x_i: i\leq n\}\in \mathcal{F}(X)$. For each $i\leq n$, let $\mathcal{B}_i=\{B_\alpha(i): \alpha\in \omega^\omega\}$ be an $\omega^\omega$-base at $x_i$. Put $${\bf \mathcal {B}}=\{\langle B_\alpha(1), ..., B_\alpha(n)\rangle: B_\alpha(i)\in \mathcal{B}_i, i\leq n, \alpha\in \omega^\omega\}.$$ We claim that ${\bf \mathcal {B}}$ is an $\omega^\omega$-base at ${\bf x}$ in $(\mathcal{F}(X), \tau_V)$. Indeed, we divide the proof into the following two steps.

\smallskip
(1) Take any basic open neighborhood ${\bf U}$ of ${\bf x}$. Without loss of generality, we may assume that ${\bf U}=\langle U_1, ..., U_n\rangle$ of ${\bf x}$ such that  for each $i\leq n$ we have $x_i\in U_i$ and $U_i\cap U_j=\emptyset$ if $j\neq i$. For each $i\leq n$, choose an $\alpha_i\in\omega^\omega$ such that $x_i\in B_{\alpha_i}(i)\subset U_i$, where $B_{\alpha_i}(i)\in \mathcal{B}_i$. Hence $\langle B_{\alpha_1}(1), ..., B_{\alpha_n}(n)\rangle\subset {\bf U}$. Take an $\alpha\in\omega^\omega$ such that $\alpha(j)=\max\{\alpha_i(j): i\leq n\}$ for each $j\in\omega$. Then, for each $i\leq n$, we have $\alpha_{i}\leq\alpha$, hence $B_\alpha(i)\subset B_{\alpha_i}(i)$. Therefore, $$\langle B_\alpha(1), ..., B_\alpha(n)\rangle\subset \langle B_{\alpha_1}(1), ..., B_{\alpha_n}(n)\rangle\subset {\bf U}.$$

\smallskip
(2) Take any $\alpha, \beta\in \omega^\omega$ such that $\alpha <\beta$. Then it is easy to see that $$\langle B_\beta(1), ..., B_\beta(n)\rangle\subset \langle B_\alpha(1), ..., B_\alpha(n)\rangle$$ since $B_{\beta}(i)\subset B_\alpha(i)$ for each $i\leq n$. Hence $\mathcal{B}$ is an $\omega^\omega$-base in $(\mathcal{F}(X), \tau_V)$.
\end{proof}

By the proof of Proposition~\ref{pr6}, it follows that $(\mathcal{F}(X), \tau_V)$ has an $\omega^\omega$-base consisting
of basic neighborhoods if $X$ has an $\omega^\omega$-base. However, the situation is different for the case of $(\mathcal{F}(X), \tau_F)$. In order to see that we first give a technical lemma.

\begin{lemma}\label{l01}
Let $B\subset \omega^\omega$ with $|B|=\omega_1$. Then there exists an $\alpha\in \omega^\omega$ such that $|\{\beta\in B: \beta\leq \alpha\}|\geq \omega$.
\end{lemma}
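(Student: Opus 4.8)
The plan is to build, by iterated pigeonhole, a decreasing chain of uncountable subsets of $B$ along which the coordinate values are stabilized, and then to extract from it a countable subfamily together with a single dominating bound $\alpha$. First I would stabilize coordinate by coordinate. Set $C_{-1}=B$. Given an uncountable $C_{k-1}\subseteq B$, partition it according to the value at coordinate $k$, i.e. into the classes $\{\beta\in C_{k-1}:\beta(k)=m\}$ for $m\in\omega$. Since a countable union of countable sets is countable, one of these classes must be uncountable; call it $C_k$ and let $v_k\in\omega$ be the corresponding common value. This produces uncountable sets $B=C_{-1}\supseteq C_0\supseteq C_1\supseteq\cdots$ and values $v_k$ such that $\beta(i)=v_i$ for every $\beta\in C_k$ and every $i\le k$. (Only the uncountability of $B$ is used here, not the precise value $\omega_1$.)

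Next I would perform a diagonal selection: since each $C_k$ is uncountable, choose $\beta_k\in C_k\setminus\{\beta_0,\dots,\beta_{k-1}\}$, so that $\{\beta_k:k\in\omega\}$ is an infinite subset of $B$. I then define $\alpha\in\omega^\omega$ by $$\alpha(j)=\max\{v_j,\beta_0(j),\beta_1(j),\dots,\beta_{j-1}(j)\},$$ where the list $\beta_0(j),\dots,\beta_{j-1}(j)$ is vacuous when $j=0$. The claim to establish is that $\beta_k\le\alpha$ for every $k$, since this gives $\{\beta_k:k\in\omega\}\subseteq\{\beta\in B:\beta\le\alpha\}$ and hence $|\{\beta\in B:\beta\le\alpha\}|\ge\omega$, as required.

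For the verification I would fix a coordinate $j$ and split on the size of $k$. If $k<j$, then $\beta_k(j)$ is literally one of the finitely many terms $\beta_0(j),\dots,\beta_{j-1}(j)$ appearing in the maximum, so $\beta_k(j)\le\alpha(j)$. If $k\ge j$, then $\beta_k\in C_k\subseteq C_j$, so the stabilization gives $\beta_k(j)=v_j\le\alpha(j)$. Thus $\beta_k(j)\le\alpha(j)$ for all $j$, i.e. $\beta_k\le\alpha$. The only real subtlety — and the place where the construction must be arranged carefully — is the control of the tails: a naive attempt to take $\alpha$ as a pointwise supremum over all of $B$ would be infinite at some coordinate. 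The device that circumvents this is exactly the asymmetry between the two cases above, namely that at each fixed coordinate $j$ all but finitely many of the selected functions $\beta_k$ have already been forced to take the single value $v_j$, so that only the finitely many ``early'' functions $\beta_0,\dots,\beta_{j-1}$ need be absorbed into $\alpha(j)$.
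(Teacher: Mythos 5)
Your proof is correct and takes essentially the same route as the paper's: an iterated pigeonhole argument producing a decreasing chain of uncountable subsets of $B$ on which initial coordinates are stabilized, followed by a countable selection that is dominated because at each coordinate $j$ all but the finitely many ``early'' functions have already been forced to the stabilized value $v_j$. If anything your closing step is cleaner than the paper's: taking $\alpha(j)=\max\{v_j,\beta_0(j),\dots,\beta_{j-1}(j)\}$ evaluates the early witnesses at the coordinate actually being bounded, whereas the paper stabilizes only at coordinates where the functions split, selects pairs of witnesses $\beta_{k1},\beta_{k2}$, and sets $\alpha(k)=\sum_{i=1}^{k}(\beta_{i1}(i)+\beta_{i2}(i))$, a formula which as literally written evaluates the witnesses only at diagonal coordinates and so leaves $\beta_{k1}(j)$ for $j>k$ uncontrolled.
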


\begin{proof}
Since $|B|=\omega_1$, there exists an $n_0\in\omega$ such that $|\{\beta\in B: \beta(0)=n_0\}|=\omega_1$. Put $$B_0=\{\beta\in B: \beta(0)=n_0\}.$$ Then we can find the smallest natural number $i_1>0$ such that $$|\{\beta\in B_0: \beta(i_1)= n_1\}|=\omega_1$$ for some $n_1\in\mathbb{N}$ and there exist $\beta_{11}\in B_0, \beta_{12}\in \{\beta\in B_0: \beta(i_1)=n_1\}$ with $\beta_{11}(i_1)\neq \beta_{12}(i_1)=n_1$. Put $$B_1=\{\beta\in B_0: \beta(i_1)=n_1\}.$$By induction, we can obtain a countable family $\{B_k: k\in\omega\}$ of subsets of $B$ and an increasing sequence  $\{i_k: k\in\mathbb{N}\}$ which satisfy the following two conditions (i) and (ii):

\smallskip
(i) $B_{k+1}\subset B_{k}\subset B$ for any $k\in\omega$, where $B_k=\{\beta\in B_{k-1}: \beta(i_k)=n_k\}$ and $|B_k|=\omega_1$ for each $k\in\omega$ and $B_{-1}=B$;

\smallskip
(ii) An infinite countable subset $\{\beta_{k1}\in B_{k-1}: k\in \mathbb{N}\}\cup \{\beta_{k2}\in B_k: k\in \mathbb{N}\}$ of $\omega^\omega$ with $\beta_{k1}(i_k)\neq \beta_{k2}(i_k)=n_k$ for any $k\in\mathbb{N}$.

\smallskip
 Define $\alpha\in\omega^\omega$ by $\alpha(k)=\sum_{i=1}^k(\beta_{i1}(i)+\beta_{i2}(i))$ for each $k\in\omega$. Obviously, we have $\beta_{k1}\leq \alpha$ and $\beta_{k2}\leq \alpha$ for each $k\in\omega$. The proof is completed.
\end{proof}

\begin{proposition}\label{pr7}
Let $D$ be a discrete space with $|D|=\omega_1$. Then $(\mathcal{F}(D), \tau_F)$ does not have any $\omega^\omega$-base.
\end{proposition}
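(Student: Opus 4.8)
The plan is to exhibit a single point of $(\mathcal{F}(D),\tau_F)$ that admits no monotone $\omega^\omega$-indexed neighborhood base; since an $\omega^\omega$-base requires such a base at every point, this already proves the proposition. Fix $d_0\in D$ and work at the singleton $p=\{d_0\}$. First I would pin down the neighborhood filter at $p$. Because $D$ is discrete, its compact subsets are exactly its finite subsets and every singleton is open; hence among the subbasic Fell-neighborhoods of $p$ the lower ones $U^-$ with $d_0\in U$ are all refined by $\{d_0\}^-=\{A:d_0\in A\}$, while the upper ones are $(K^c)^+=\{A:A\cap K=\emptyset\}$ with $K$ finite and $d_0\notin K$. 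Therefore the sets
$$ V_K=\{A\in \mathcal{F}(D): d_0\in A,\ A\cap K=\emptyset\},\qquad K\subseteq D\setminus\{d_0\}\ \text{finite}, $$
form a neighborhood base at $p$. Two features are crucial: (a) for fixed finite $K$ one has $\{d_0,d\}\in V_K$ for every $d\notin K$, so every neighborhood of $p$ contains $\{d_0,d\}$ for all but finitely many $d$; and (b) $V_{\{d\}}$ is a neighborhood of $p$ that misses $\{d_0,d\}$.

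Arguing by contradiction, I would assume $\{U_\alpha:\alpha\in\omega^\omega\}$ is a monotone neighborhood base at $p$, so that $U_\beta\subseteq U_\alpha$ whenever $\alpha\leq\beta$. By (b) and the base property, for each $d\in D\setminus\{d_0\}$ there is $\alpha_d\in\omega^\omega$ with $U_{\alpha_d}\subseteq V_{\{d\}}$; in particular $\{d_0,d\}\notin U_{\alpha_d}$, and by monotonicity $\{d_0,d\}\notin U_\beta$ for every $\beta\geq\alpha_d$.

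The heart of the argument is to produce one index $\alpha$ together with infinitely many distinct $d$ for which $\alpha_d\leq\alpha$. I would split on the size of $B=\{\alpha_d:d\in D\setminus\{d_0\}\}\subseteq\omega^\omega$. If $B$ is countable, a pigeonhole on the partition of the $\omega_1$-sized set $D\setminus\{d_0\}$ according to the value of $\alpha_d$ yields an uncountable (in particular infinite) class of $d$ sharing a common value $\alpha_d=:\alpha$. If $|B|=\omega_1$, then Lemma~\ref{l01} supplies $\alpha\in\omega^\omega$ dominating infinitely many distinct members of $B$, each of which equals $\alpha_d$ for some $d$, giving infinitely many distinct such $d$. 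Either way I obtain $\alpha$ and an infinite $D'\subseteq D\setminus\{d_0\}$ with $\alpha_d\leq\alpha$ for all $d\in D'$. Then monotonicity gives $U_\alpha\subseteq U_{\alpha_d}$, whence $\{d_0,d\}\notin U_\alpha$ for every $d\in D'$; thus $U_\alpha$ excludes $\{d_0,d\}$ for infinitely many $d$. But $U_\alpha$, being a neighborhood of $p$, contains some basic $V_{K_\alpha}$, and by (a) it can exclude $\{d_0,d\}$ only for the finitely many $d\in K_\alpha$ — a contradiction.

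The step I expect to be the main obstacle is exactly this extraction of a single dominating $\alpha$ over infinitely many $d$: the everywhere-domination order on $\omega^\omega$ is not $\sigma$-directed, so one cannot simply bound the (possibly countably many distinct) values $\alpha_d$ by a common upper bound and read off the contradiction. This is precisely the gap that Lemma~\ref{l01} is designed to bridge in the uncountable case, with the countable case handled by pigeonhole against the cardinality $\omega_1$ of $D$.
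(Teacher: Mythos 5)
Your proof is correct and follows essentially the same route as the paper's: both work at a singleton $\{d_0\}$, choose for each $d$ an index $\alpha_d$ whose base element avoids $\{d_0,d\}$ (equivalently, lies in $(\{d\}^c)^+$), split on whether $B=\{\alpha_d\}$ is countable (pigeonhole) or of size $\omega_1$ (Lemma~\ref{l01}), and derive a contradiction from the fact that compact subsets of a discrete space are finite. Your explicit description of the neighborhood base $V_K$ and the two-point witness sets $\{d_0,d\}$ merely makes transparent a step the paper leaves implicit, and your single unified final contradiction replaces the paper's two case-by-case ones, but the substance is identical.
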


\begin{proof}
Fix any $x\in D$. We prove that $(\mathcal{F}(D), \tau_F)$ does not have any $\omega^\omega$-base at $\{x\}$. If not, we suppose that $\mathcal{B}=\{{\bf U}_\alpha: \alpha\in \omega^\omega\}$ is an $\omega^\omega$-base at $\{x\}$ in $(\mathcal{F}(D), \tau_F)$. Enumerate $D\setminus\{x\}$ as $\{d_\beta: \beta\in\omega_1\}$. For each $\beta\in\omega_1$, the set $(\{d_\beta\}^c)^+$ is an open neighborhood of $\{x\}$, hence we can choose an $\alpha_\beta\in\omega^\omega$ such that ${\bf U}_{\alpha_\beta}\subset (\{d_\beta\}^c)^+$. Let $B=\{\alpha_\beta\in\omega^\omega: \beta\in \omega_1\}$. We divide the proof into the following two cases.

\smallskip
 {\bf Case 1:} $|B|<\omega_1$.

\smallskip
 There exist $\beta_0\in\omega_1$ and a uncountable subset $B_1\subset B$ such that $\alpha_\gamma=\alpha_{\beta_0}$ for any $\gamma\in B_1$. Then ${\bf U}_{\beta_0}\subset (\{d_\gamma\}^c)^+$ for each $\gamma\in B_1$. Choose a basic neighborhood $\mathcal{V}^-\cap (H^c)^+$ of $\{x\}$ such that $\mathcal{V}^-\cap (H^c)^+\cap \mathcal{F}(D)\subset {\bf U}_{\beta_0}\subset (\{d_{\gamma}\}^c)^+$ for each $\gamma$, where $\mathcal{V}$ is a finite family of open subsets and $H$ is compact in $X$. Then $H\supset \{d_{\gamma}: \gamma\in B_1\}$, which implies that $H$ an infinite compact subset of $D$. This is a contradiction.

\smallskip
 {\bf Case 2:} $|B|=\omega_1$.

\smallskip
  By Lemma ~\ref{l01}, there exists $\gamma\in\omega^\omega$ such that $\{\alpha_\beta\in B: \alpha_\beta\leq \gamma\}$ is countable and infinite. Enumerate $\{\alpha_\beta\in B: \alpha_\beta\leq \gamma\}$ as $\{\alpha_{\beta(i)}: i\in\omega\}$. Then $$\{x\}\in {\bf U}_\gamma\subset {\bf U}_{\alpha_{\beta(i)}}\subset (\{d_{\beta(i)}\}^c)^+$$for each $i\in\omega$. Choose a basic neighborhood $\mathcal{U}^-\cap (K^c)^+$ of $\{x\}$ such that $$\mathcal{U}^-\cap (K^c)^+\cap \mathcal{F}(D)\subset {\bf U}_\gamma\subset (\{d_{\beta(i)}\}^c)^+$$ for each $i\in\omega$, where $\mathcal{U}$ is a finite family of open subsets and $K$ is compact in $X$. Hence $K\supset \{d_{\beta(i)}\}$ for each $i\in\omega$, then $K$ is an infinite compact subset of $D$, which is a contradiction.

  Therefore, $(\cap \mathcal{F}(D), \tau_F)$ does not have any $\omega^\omega$-base.
\end{proof}

By Proposition~\ref{pr7}, we have the following corollary.

\begin{corollary}
Let $D$ be a discrete space with $|D|=\omega_1$. Then $(CL(D), \tau_F)$ does not have an $\omega^\omega$-base.
\end{corollary}

Next, we give an answer for Question~\ref{q0}. First, we give some technical lemmas.

\begin{lemma}\label{le1}
\cite[Lemma 2.3.1]{M1951} Let $U_1, ..., U_n$ and $V_1,..., V_m$ be
subsets of a space $X$. Then, in Vietoris topology $(CL(X),
\tau_V)$, we have $\langle U_1, ..., U_n\rangle\subset \langle
V_1,..., V_m\rangle$ if and only if $\bigcup_{j=1}^nU_j\subset
\bigcup_{j=1}^mV_j$ and for every $V_i$ there exists an $U_k$ such
that $U_k\subset V_i$.
\end{lemma}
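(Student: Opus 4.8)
The plan is to argue both implications of the biconditional directly from the definition $\langle W_1,\ldots,W_k\rangle=\{H\in CL(X):H\subset\bigcup_{i=1}^{k}W_i\ \text{and}\ H\cap W_j\neq\emptyset\ \text{for}\ 1\le j\le k\}$. The sufficiency is a routine set-chase; the necessity carries the real content, and there I would recover the two combinatorial conditions by feeding suitably chosen finite closed subsets of $X$ into the hypothesized inclusion $\langle U_1,\ldots,U_n\rangle\subset\langle V_1,\ldots,V_m\rangle$. Since all spaces are Tychonoff and hence $T_1$, every finite subset of $X$ is closed and therefore a legitimate member of $CL(X)$, so such finite sets are available as probes.

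For sufficiency I would assume $\bigcup_{j=1}^{n}U_j\subset\bigcup_{i=1}^{m}V_i$ together with the hypothesis that each $V_i$ contains some $U_{k(i)}$, take an arbitrary $H\in\langle U_1,\ldots,U_n\rangle$, and check the two defining conditions for $H\in\langle V_1,\ldots,V_m\rangle$. The containment $H\subset\bigcup_i V_i$ is immediate from $H\subset\bigcup_j U_j$ and the first hypothesis, while $H\cap V_i\neq\emptyset$ follows because $H$ already meets $U_{k(i)}\subset V_i$. Hence $\langle U_1,\ldots,U_n\rangle\subset\langle V_1,\ldots,V_m\rangle$.

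For necessity I would assume the inclusion and derive the two conditions separately. To get $\bigcup_j U_j\subset\bigcup_i V_i$, fix any $x\in U_{k_0}$, choose a witness $y_j\in U_j$ for each $j$, and form the finite closed set $H=\{x,y_1,\ldots,y_n\}$; then $H\in\langle U_1,\ldots,U_n\rangle$, so by hypothesis $H\in\langle V_1,\ldots,V_m\rangle$, which forces $x\in\bigcup_i V_i$. To get the second condition I would argue by contradiction: if some $V_i$ contained no $U_k$, then for each $k$ I could pick $z_k\in U_k\setminus V_i$, and the finite closed set $H=\{z_1,\ldots,z_n\}$ would lie in $\langle U_1,\ldots,U_n\rangle$ yet satisfy $H\cap V_i=\emptyset$, contradicting $H\in\langle V_1,\ldots,V_m\rangle$. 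The one point I would treat with care --- and the only genuine obstacle --- is the degenerate case: an empty $U_j$ makes $\langle U_1,\ldots,U_n\rangle$ empty and the inclusion vacuously true while the stated conditions may fail, so I would assume throughout, as is standard for basic neighborhoods, that each listed set is nonempty. Beyond this the whole argument rests on the single mechanism that finite sets of witness points are closed and hence usable as test elements of $CL(X)$.
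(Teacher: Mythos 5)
Your proof is correct; note that the paper itself gives no proof of this lemma at all, quoting it verbatim from Michael's 1951 paper, so there is nothing internal to compare against --- and your witness-point argument (finite sets are nonempty closed sets in a $T_1$ space, hence legitimate test elements of $CL(X)$) is the standard verification of this fact. Your caveat about empty sets is also a genuine and necessary observation, not pedantry: for arbitrary subsets the necessity direction fails (e.g.\ in the two-point discrete space $X=\{a,b\}$, taking $U_1=\emptyset$, $U_2=\{a\}$, $V_1=\{b\}$ makes $\langle U_1,U_2\rangle=\emptyset$, so the inclusion holds vacuously while $\bigcup_j U_j\not\subset\bigcup_i V_i$), so the lemma as literally stated needs the nonemptiness convention you impose --- which is harmless here, since every application in the paper uses nonempty basic open sets.
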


The following lemma is a complement for Propositions~\ref{pr6} and~\ref{pr7}.

\begin{lemma}\label{l1}
Let $D$ be a discrete space with $|D|=\omega_1$. Then $(CL(D), \tau_V)$ does not have any $\omega^\omega$-base.
\end{lemma}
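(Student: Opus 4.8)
The plan is to argue at a single, carefully chosen point of $(CL(D),\tau_V)$ and show its neighborhood filter cannot be generated by a monotone $\omega^\omega$-indexed family. The first observation is that one must \emph{not} pick a finite closed set: since $D$ is discrete, for a nonempty finite $F=\{x_1,\dots,x_n\}$ Lemma~\ref{le1} gives $\langle\{x_1\},\dots,\{x_n\}\rangle=\{F\}$, so every finite subset is isolated in $(CL(D),\tau_V)$ and trivially carries an $\omega^\omega$-base. I therefore fix the point $D\in CL(D)$ (the whole space) and work there.

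Next I would pin down a concrete neighborhood base at $D$. For a nonempty finite $A\subset D$ put $W_A=\{H\in CL(D):A\subseteq H\}=\langle\,\{a\}\ (a\in A),\,D\,\rangle$, which is basic open and contains $D$. Using Lemma~\ref{le1}, for any basic neighborhood $\langle U_1,\dots,U_k\rangle$ of $D$ (so $\bigcup_i U_i=D$) I pick one point $p_j\in U_j$ and set $A=\{p_1,\dots,p_k\}$; then $\bigcup\{\{a\}:a\in A\}\cup D=D=\bigcup_i U_i$ and each $U_j$ contains the piece $\{p_j\}$, so $W_A\subseteq\langle U_1,\dots,U_k\rangle$. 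Hence $\{W_A:A\in[D]^{<\omega}\setminus\{\emptyset\}\}$ is a neighborhood base at $D$, reversely ordered by inclusion of the index sets; in particular each $W_{\{d\}}$, $d\in D$, is a neighborhood of $D$.

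Now suppose for contradiction that $\{U_\alpha:\alpha\in\omega^\omega\}$ is an $\omega^\omega$-base at $D$. For each $d\in D$ choose $\alpha_d\in\omega^\omega$ with $U_{\alpha_d}\subseteq W_{\{d\}}$, and set $B=\{\alpha_d:d\in D\}$. The argument then splits exactly as in Proposition~\ref{pr7}. If $|B|<\omega_1$, some single value $\alpha^\ast$ is attained on an uncountable set $D'\subseteq D$, whence $U_{\alpha^\ast}\subseteq\bigcap_{d\in D'}W_{\{d\}}$. If $|B|=\omega_1$, Lemma~\ref{l01} supplies $\gamma\in\omega^\omega$ with $\{\alpha_d\le\gamma\}$ infinite; choosing distinct witnesses and using monotonicity $U_\gamma\subseteq U_{\alpha_d}\subseteq W_{\{d\}}$ for an infinite set of distinct $d$'s. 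Either way I obtain an open neighborhood $U_\ast$ of $D$ and an infinite set $S\subseteq D$ with $U_\ast\subseteq\{H\in CL(D):S\subseteq H\}$.

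The final step derives the contradiction from the isolation phenomenon noted at the start: being open, $U_\ast$ contains some basic $W_A$ with $A$ finite and nonempty, so $W_A\subseteq\{H:S\subseteq H\}$; but $A\in W_A$ while $A\not\supseteq S$ (as $A$ is finite and $S$ infinite), which is absurd. I expect the middle step to be the main obstacle, namely correctly extracting the finite-set neighborhood base $\{W_A\}$ through Michael's Lemma~\ref{le1} and repackaging both cardinality cases into the single clean conclusion ``$U_\ast\subseteq\{H:S\subseteq H\}$ for some infinite $S$''; once that is in place the contradiction is immediate, and it follows that $(CL(D),\tau_V)$ has no $\omega^\omega$-base.
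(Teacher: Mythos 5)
Your proof is correct and takes essentially the same approach as the paper's: both argue at the point $D\in CL(D)$, use the neighborhoods $\langle\{d\},D\rangle=W_{\{d\}}$ indexed by the points of $D$, split into the cases $|B|<\omega_1$ (pigeonhole) and $|B|=\omega_1$ (Lemma~\ref{l01}), and then derive a contradiction from the fact that a basic Vietoris neighborhood of $D$, being determined by finitely many open sets, cannot force all of its members to contain a fixed infinite set. Your closing step (exhibiting the finite set $A\in W_A$ as a member violating $S\subseteq A$) is only a mild repackaging of the paper's direct application of Lemma~\ref{le1}, which forces infinitely many distinct singletons $\{d_\beta\}$ to occur among finitely many $V_i$.
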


\begin{proof}
Suppose that $(CL(D), \tau_V)$ have an $\omega^\omega$-base. Let $\mathcal{B}=\{B_\alpha: \alpha\in\omega^\omega\}$ be an $\omega^\omega$-base at $D\in CL(D)$. Enumerate $D$ as $\{d_\beta: \beta\in \omega_1\}$. For each $\beta\in \omega_1$, it is obvious that $\langle \{d_\beta\}, D\rangle$ is a neighborhood of $D$, hence there is $B_{\alpha_\beta}\in \mathcal{B}$ such that $D\in B_{\alpha_\beta}\subset \langle \{d_\beta\}, D\rangle$.
If $|\{\alpha_\beta: \beta\in \omega_1\}|<\omega_1$, then there are infinitely many $\alpha_\beta$ are equal; then let $\gamma$ be such an $\alpha_\beta$.
If $|\{\alpha_\beta: \beta\in \omega_1\}|=\omega_1$, by Lemma ~\ref{l01}, there is a $\gamma\in\omega^\omega$ such that $\gamma$ is greater or equal than infinitely many $\alpha_\beta$.
Then we have $$B_\gamma\subset B_{\alpha_\beta}\subset \langle \{d_\beta\}, D\rangle$$ for infinitely many $\beta$. Choose a basic neighborhood $\langle V_1, ..., V_n, D\rangle\subset B_\gamma$ of $D$;  by Lemma ~\ref{le1}, each $\{d_\beta\}$ contains some $V_i$, which implies $\{d_\beta\}=V_i$. However, this is a contradiction since there are infinitely many such $\beta$ and only finitely many $V_{i}$.
\end{proof}

However, the situation is different for the locally finite topology.

\begin{proposition}
Let $D$ be a discrete space. Then $(CL(D), \tau_{lf})$ is discrete, hence it has an $\omega^\omega$-base.
\end{proposition}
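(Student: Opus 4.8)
The plan is to prove discreteness directly by showing that every singleton $\{A\}$, with $A \in CL(D)$, is open in $\tau_{lf}$; the existence of an $\omega^\omega$-base is then immediate. First I would record that since $D$ is discrete, every subset of $D$ is clopen, so $CL(D)$ is exactly the collection of all nonempty subsets of $D$, and every open set $V$ and every family of subsets is available for use in the subbase. Fix an arbitrary $A \in CL(D)$; the goal is to produce a basic $\tau_{lf}$-open set whose only element is $A$.

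The idea is to capture the two inclusions $B \subseteq A$ and $A \subseteq B$ separately, using one subbasic set of each type. For the first, take $V = A$ (open in $D$); then $V^{+} = A^{+} = \{B \in CL(D): B \subseteq A\}$ is $\tau_{lf}$-open. For the second, consider the family $\mathcal{U} = \{\{d\}: d \in A\}$ of singletons, each open since $D$ is discrete. The key observation is that in a discrete space \emph{every} family of subsets is locally finite: for any point $x \in D$, the neighborhood $\{x\}$ meets at most one member of $\mathcal{U}$. Hence $\mathcal{U}^{-}$ is a legitimate subbasic $\tau_{lf}$-open set, and by its definition $\mathcal{U}^{-} = \{B \in CL(D): d \in B \text{ for all } d \in A\} = \{B \in CL(D): A \subseteq B\}$. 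Intersecting the two subbasic sets gives $A^{+} \cap \mathcal{U}^{-} = \{B : B = A\} = \{A\}$, so $\{A\}$ is $\tau_{lf}$-open and $(CL(D), \tau_{lf})$ is discrete.

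Finally, any discrete space trivially carries an $\omega^\omega$-base: setting $U_{\alpha}[A] = \{A\}$ for every $\alpha \in \omega^\omega$ yields a neighborhood base at each point $A$, and the monotonicity requirement $U_{\beta}[A] \subseteq U_{\alpha}[A]$ for $\alpha \leq \beta$ holds with equality. This completes the argument.

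The only step requiring any care is verifying that $\mathcal{U}$ is an admissible index for a subbasic set, that is, that it is locally finite; but this is precisely where discreteness of $D$ does all the work. This is also exactly the feature distinguishing $\tau_{lf}$ from $\tau_V$, where only \emph{finite} families are permitted in $\mathcal{U}^{-}$, which is why Lemma~\ref{l1} shows that $(CL(D), \tau_V)$ fails to be discrete when $|D| = \omega_1$. I therefore expect no genuine obstacle here beyond correctly invoking the local finiteness clause of the subbase.
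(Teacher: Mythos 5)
Your proof is correct and is essentially the paper's own argument: the paper also writes $\{A\}=A^{+}\cap\bigl(\bigcap_{a\in A}\{a\}^{-}\bigr)$, which is exactly your $A^{+}\cap\mathcal{U}^{-}$ with $\mathcal{U}=\{\{d\}:d\in A\}$. The only difference is that you explicitly verify the local finiteness of $\mathcal{U}$ (the point that legitimizes $\mathcal{U}^{-}$ as a subbasic open set), which the paper leaves implicit.
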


\begin{proof}
Take any $A\in CL(D)$. Then $A$ is open in $D$, hence $\{A\}=A^{+}\cap (\bigcap_{a\in A}\{a\}^{-})$ is an open neighborhood of $A$. Therefore, $(CL(D), \tau_{lf})$ is discrete, thus it has an $\omega^\omega$-base.
\end{proof}

Let $f: X\rightarrow Y$ be a map between two spaces $X$ and $Y$. Then $f$ is said to be {\it closed} if $f(F)$ is closed in $Y$ for each closed subset $F$ in $X$.

\begin{lemma}\label{l2}
Let $X$ be a metrizable space, and let $A\in CL(X)$ such that $A$ is nowhere dense. Then $A$ has an $\omega^\omega$-base in $X$ if and only if $A$ is $\sigma$-compact.
\end{lemma}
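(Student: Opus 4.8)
The plan is to read the statement as follows: the closed set $A$ \emph{has an $\omega^\omega$-base in $X$} if the neighbourhood filter of the \emph{set} $A$ in $X$ admits a base $\{U_\alpha:\alpha\in\omega^\omega\}$ of open sets with $A\subseteq U_\alpha$ for all $\alpha$ and $U_\beta\subseteq U_\alpha$ whenever $\alpha\le\beta$. Fix a metric $d\le 1$ generating the topology of $X$, and for an open $U\supseteq A$ and $a\in A$ write the local width $\rho_U(a)=d(a,X\setminus U)>0$. The two implications are of very different character, and I would treat them separately.

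For the sufficiency, suppose $A=\bigcup_{n\in\omega}K_n$ with each $K_n$ compact and $K_n\subseteq K_{n+1}$. For $\alpha\in\omega^\omega$ put $U_\alpha=\bigcup_{n\in\omega}\{x\in X:d(x,K_n)<1/(\alpha(n)+1)\}$. Each $U_\alpha$ is open and contains $A$, and $\alpha\le\beta$ forces $U_\beta\subseteq U_\alpha$, so the family is monotone. To see it is a base, let $U\supseteq A$ be open; since $K_n$ is compact and $K_n\subseteq U$, a standard tube/Lebesgue-number argument yields $\varepsilon_n>0$ with $\{x:d(x,K_n)<\varepsilon_n\}\subseteq U$, and choosing $\alpha(n)$ with $1/(\alpha(n)+1)<\varepsilon_n$ gives $U_\alpha\subseteq U$. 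This half uses neither nowhere-density nor completeness.

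For the necessity, I would pass from a base $\{U_\alpha\}$ to the closed level sets $K_{\alpha,n}=\{a\in A:\rho_{U_\alpha}(a)\ge 1/(n+1)\}$; these increase with $n$, decrease with $\alpha$, and satisfy $\bigcup_n K_{\alpha,n}=A$ for each fixed $\alpha$ (because every $a\in A$ lies in the open set $U_\alpha$). Thus $A$ is already covered by countably many $K_{\alpha,n}$ for \emph{any} single $\alpha$, and the whole problem reduces to locating one $\alpha$ for which every $K_{\alpha,n}$ is compact; for that $\alpha$, the equality $A=\bigcup_n K_{\alpha,n}$ exhibits $\sigma$-compactness. I would prove existence of such $\alpha$ by contradiction: if for every $\alpha$ some $K_{\alpha,n_\alpha}$ is non-compact, then, being closed in the metric space $X$, it contains an infinite set $S_\alpha\subseteq A$ that is closed and discrete in $X$ (a sequence without a cluster point), each point $a\in S_\alpha$ carrying a ball $B(a,1/(n_\alpha+1))\subseteq U_\alpha$. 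Here nowhere-density enters decisively: since $X\setminus A$ is dense, each such ball meets $X\setminus A$, so I can pick exterior witness points and use them to ``pinch'' a single open neighbourhood $W\supseteq A$ whose local width at chosen points of $S_\alpha$ is forced below $1/(n_\alpha+1)$, giving $U_\alpha\not\subseteq W$ for every $\alpha$ and contradicting that $\{U_\alpha\}$ is a base.

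The crux, and the step I expect to be the main obstacle, is producing one neighbourhood $W$ that simultaneously defeats all $\omega^\omega$ indices $\alpha$, since $W$ amounts to a single lower-semicontinuous width assignment $a\mapsto w(a)$ that must beat the whole family $\{(S_\alpha,n_\alpha):\alpha\in\omega^\omega\}$. When $A$ is non-separable, the witnesses accumulate an uncountable closed discrete subset of $A$, and the obstruction is exactly of the type handled in Proposition~\ref{pr7} and Lemma~\ref{l1}: a diagonal choice of $w$ combined with Lemma~\ref{l01} shows that no $\omega^\omega$-indexed family can keep up, so this subcase follows the established pattern together with \cite[Corollary 8.2.3]{B2017}. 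The genuinely hard subcase is $A$ separable but not $\sigma$-compact; there I would organise the witnesses $S_\alpha$ along the tree $\omega^{<\omega}$ and run a fusion/diagonalisation respecting the product order on $\omega^\omega$, the point being that non-$\sigma$-compactness supplies an $\omega^\omega$-indexed escaping structure matched against the index set, against which a single width function can diagonalise. Care is needed to treat both ways compactness can fail (loss of total boundedness and loss of completeness) uniformly through the ``escaping sequence'' formulation, and to keep $W$ open and containing $A$ while pinched near the selected exterior points.
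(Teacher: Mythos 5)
Your sufficiency argument is correct and is essentially the paper's own: the paper fixes a decreasing countable neighbourhood base $\{B_m(n):m\in\omega\}$ at each compact piece $K_n$ and puts $U_\alpha=\bigcup_{n}B_{\alpha(n)}(n)$; your metric-ball version $U_\alpha=\bigcup_n\{x:d(x,K_n)<1/(\alpha(n)+1)\}$ is the same construction, and you are right that this half uses neither nowhere density nor completeness.

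The necessity direction, however, contains a genuine gap, and it sits exactly where you yourself flag it. The reduction to ``find one $\alpha$ such that every level set $K_{\alpha,n}$ is compact'' is sound, and so is the extraction of the closed discrete witness sets $S_\alpha$ with $B(a,1/(n_\alpha+1))\subseteq U_\alpha$ for $a\in S_\alpha$. But the contradiction step --- producing a \emph{single} open $W\supseteq A$ with $U_\alpha\not\subseteq W$ for all of the continuum many $\alpha\in\omega^\omega$ --- is the entire content of the implication, and you only describe a plan (``fusion/diagonalisation respecting the product order'', ``escaping structure'') without carrying it out. Nothing in the proposal shows that one lower-semicontinuous width function can simultaneously defeat the family $\{(S_\alpha,n_\alpha):\alpha\in\omega^\omega\}$; each $\alpha$ needs only one pinched witness, but $W$ is one set against $\mathfrak{c}$ many indices, so some real structural input is indispensable and none is supplied. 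Moreover, your claim that the non-separable subcase ``follows the established pattern'' of Proposition~\ref{pr7} and Lemma~\ref{l1} does not transfer as stated: those proofs exploit the special combinatorial form of Fell/Vietoris basic neighbourhoods (complements of compact sets, finitely many $V_i$), whereas here the $U_\alpha$ are arbitrary open neighbourhoods of $A$; a correct argument for that subcase (for instance, using a disjoint family of balls around an uncountable closed discrete subset of $A$ to Tukey-embed $\omega^{\omega_1}$ into the neighbourhood filter of $A$ and comparing additivities, or invoking Lemma~\ref{l01} after such a reduction) still has to be written, and the separable non-$\sigma$-compact subcase is harder yet. The paper sidesteps all of this: it forms the quotient $Y=X/A$ with the (closed) quotient map $f$, pushes the given base forward to the $\omega^\omega$-base $\{Y\setminus f(X\setminus U_\alpha):\alpha\in\omega^\omega\}$ at the collapsed point, notes that $Y$ is first countable at all other points, and then quotes \cite[Corollary 8.2.3(3)]{B2017}, which yields $\sigma$-compactness of $A$ (for nowhere dense $A$) once $Y$ has an $\omega^\omega$-base. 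In other words, the combinatorial core that you identify as ``the main obstacle'' is precisely what the citation to Banakh replaces, and your proposal does not replace it with an actual argument.
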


\begin{proof}
Sufficiency. Suppose $A$ is $\sigma$-compact. Then we can assume that $A=\bigcup_{n\in\omega}K_n$, where each $K_n$ is compact. Since $X$ is metrizable, for each $n\in\mathbb{N}$, let $\{B_m(n): m\in\omega\}$ be a decreasingly local base at $K_n$ in $X$ such that $B_{m+1}(n)\subset B_m(n)$ for each $m\in\mathbb{N}$. For each $\alpha\in \omega^\omega$, let $U_\alpha=\bigcup_{n\in\omega}B_{\alpha(n)}(n)$. Then $\{U_\alpha: \alpha\in \omega^\omega\}$ is an $\omega^\omega$-base at $A$ in $X$.

Necessity. Let $A$ have an $\omega^\omega$-base $\{U_\alpha: \alpha\in \omega^\omega\}$ in $X$. Let $Y$ be the quotient space of $X$ by identifying $A$ as a point $p$. Clearly, the quotient map $f: X\to Y$ is a closed map. In order to see that $A$ is $\sigma$-compact, by \cite[Corollary 8.2.3(3)]{B2017}, it suffices to prove that $Y$ has an $\omega^\omega$-base.

For each $\alpha\in\omega^\omega$, let $V_\alpha=Y\setminus f(X\setminus U_\alpha)$. We claim that $\{V_\alpha: \alpha\in \omega^\omega\}$ is an $\omega^\omega$-base at $p$. Indeed, it is easy to check that $V_\beta\subset V_\alpha$ if $\alpha<\beta$. Now pick any open set $W$ with $p\in W$ in $X$; then $f^{-1}(p)=A\subset f^{-1}(W)$, hence there is $U_\alpha$ such that $A\subset U_\alpha\subset f^{-1}(W)$, then since $X\setminus f^{-1}(W)\subset X\setminus U_\alpha$,  it follows that $$p\in V_\alpha=Y\setminus f(X\setminus U_\alpha)\subset Y\setminus f(X\setminus f^{-1}(W))=Y\setminus(Y\setminus W)=W.$$ Hence $\{V_\alpha: \alpha\in \omega^\omega\}$ is an $\omega^\omega$-base at $p$.

For any $y\in Y\setminus \{p\}$, we have $y\in X\setminus A$. Let $\{U_\alpha(y): \alpha\in \omega^\omega\}$ be an $\omega^\omega$-base at $y$ in $X$. Then $\{U_\alpha(y)\cap (X\setminus A): \alpha\in \omega^\omega\}$ is also an $\omega^\omega$-base at $y$ in $X$. For each $\alpha\in \omega^\omega$, let $V_\alpha(y)=U_\alpha(y)\cap (X\setminus A)$. Therefore, $\{V_\alpha: \alpha\in \omega^\omega\}$ is an $\omega^\omega$-base at $y$ in $Y$.

 Therefore, $Y$ has an $\omega^\omega$-base.
\end{proof}

Now we can prove the second main theorems.

\begin{theorem}\label{tt1}
Let $X$ be a metrizable space. Then $(CL(X), \tau_V)$ has an $\omega^\omega$-base if and only if $X$ is separable and the boundary of each closed subset of $X$ is $\sigma$-compact.
\end{theorem}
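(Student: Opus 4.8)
The plan is to prove both directions by analyzing neighborhood bases at a fixed closed set $A \in CL(X)$, exploiting the structure of basic Vietoris neighborhoods $\langle U_1, \ldots, U_k \rangle$. For the necessity direction, suppose $(CL(X), \tau_V)$ has an $\omega^\omega$-base. First I would establish separability: the singleton closed sets $\{x\}$ form a copy of $X$ inside $CL(X)$ (the map $x \mapsto \{x\}$ is an embedding into the Vietoris topology), and any subspace of a space with an $\omega^\omega$-base again has an $\omega^\omega$-base by \cite[Corollary 8.2.3]{B2017}. Since a metrizable space with an $\omega^\omega$-base is separable (a non-separable metrizable space contains a closed discrete subset of size $\omega_1$, and by Lemma~\ref{l1} this rules out an $\omega^\omega$-base), separability of $X$ follows. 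For the boundary condition, fix a closed set $C \subset X$ and set $A = \partial C$. The key observation is that a neighborhood base at $A$ in the Vietoris topology, restricted appropriately, induces an $\omega^\omega$-base for the \emph{set} $A$ in the ambient space $X$; since $A = \partial C$ is nowhere dense, Lemma~\ref{l2} then forces $A$ to be $\sigma$-compact.

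For the sufficiency direction, assume $X$ is separable metrizable and every boundary $\partial C$ is $\sigma$-compact. Fix $A \in CL(X)$ and I would build an explicit $\omega^\omega$-base at $A$. The natural decomposition is $A = \mathrm{int}(A) \cup \partial A$, where $\mathrm{int}(A)$ is the interior of $A$ and $\partial A$ is its boundary, which is nowhere dense and $\sigma$-compact by hypothesis. A basic Vietoris neighborhood $\langle U_1, \ldots, U_k \rangle$ of $A$ must satisfy $A \subset \bigcup_i U_i$ (an upper/$+$ condition) and each $U_j$ must meet $A$ (lower/$-$ conditions). The upper condition $A \subset \bigcup_i U_i$ is governed by covering the $\sigma$-compact nowhere-dense part $\partial A$ together with the open part $\mathrm{int}(A)$; using the $\sigma$-compactness of $\partial A$ I would produce, as in the sufficiency of Lemma~\ref{l2}, an $\omega^\omega$-indexed decreasing family $\{U_\alpha\}$ of open supersets of $A$. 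The lower conditions are handled using separability: fix a countable dense subset $\{d_n : n \in \omega\}$ of $X$ and, for each $\alpha \in \omega^\omega$, use the values $\alpha(n)$ to select shrinking open neighborhoods of those $d_n$ lying in $A$, so that the family of finite lower constraints is itself parametrized by $\omega^\omega$. Combining the upper family with the lower family and taking coordinatewise maxima (exactly as in the proof of Proposition~\ref{pr6}) yields a monotone $\omega^\omega$-indexed family; I would then verify it is a base by checking that every basic $\langle U_1, \ldots, U_k \rangle \supset A$ is refined by some member.

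The main obstacle I anticipate is the cofinal interaction between the upper and lower conditions in the sufficiency proof, specifically ensuring that a \emph{single} $\omega^\omega$ index simultaneously controls (i) the covering of the $\sigma$-compact boundary $\partial A$ and (ii) the countably many lower traces on the dense set, while preserving monotonicity in the partial order on $\omega^\omega$. The separability hypothesis is essential precisely here: without a countable dense set the lower conditions $A \cap U_j \neq \emptyset$ could require an uncountable family of constraints, and the Lemma~\ref{l1} obstruction shows this genuinely fails for discrete $D$ of size $\omega_1$. The most delicate verification will be the base property in step one of the refinement check, where given an arbitrary $\langle U_1, \ldots, U_k \rangle \supset A$ one must locate an $\alpha$ whose associated upper set lies inside $\bigcup_i U_i$ and whose lower neighborhoods each fall inside some $U_j$ meeting $A$; this requires that the nowhere-dense part $\partial A$, being $\sigma$-compact, admits a compact exhaustion whose neighborhoods can be trapped inside any open cover, which is where the $\sigma$-compactness of boundaries is consumed. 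The necessity direction's reduction to Lemma~\ref{l2} is the cleaner half, since it repackages the nowhere-dense case already proved there.
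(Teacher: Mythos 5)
Your overall route coincides with the paper's: separability is forced by a closed discrete subset of size $\omega_1$ together with Lemma~\ref{l1}; the boundary condition is obtained by transferring an $\omega^\omega$-base at the point $\partial C\in CL(X)$ down to an $\omega^\omega$-base for the \emph{set} $\partial C$ in $X$ and invoking Lemma~\ref{l2}; and sufficiency combines the upper family coming from Lemma~\ref{l2} with lower conditions anchored at a countable dense set, with monotonicity arranged by coordinatewise maxima and the base property checked via Lemma~\ref{le1}. However, two steps as written are wrong. First, the sentence ``a metrizable space with an $\omega^\omega$-base is separable'' is false: every first-countable space has an $\omega^\omega$-base (if $\{V_n\}_{n\in\omega}$ is a decreasing countable base at $x$, put $U_\alpha=V_{\alpha(0)}$), so every metrizable space, e.g.\ an uncountable discrete one, has an $\omega^\omega$-base; for the same reason the embedding $x\mapsto\{x\}$ of $X$ into $(CL(X),\tau_V)$ yields no information. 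What actually forces separability is the $\omega^\omega$-base of the \emph{hyperspace}: if $D\subset X$ is closed discrete with $|D|=\omega_1$, then $(CL(D),\tau_V)$ is a closed subspace of $(CL(X),\tau_V)$, hence would inherit an $\omega^\omega$-base, contradicting Lemma~\ref{l1}. Your parenthetical contains these ingredients, but the contradiction must be routed through $CL(D)$, not through $D$ or through a (nonexistent) property of metrizable spaces.

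Second, and more substantively, in the sufficiency you anchor the lower (``$-$'') conditions at ``those $d_n$ lying in $A$,'' where $\{d_n\}$ is a countable dense subset of $X$. But $\{d_n\}\cap A$ need not be dense in $A$, and can even miss whole pieces of $A$: take $X=\mathbb{R}$, $\{d_n\}=\mathbb{Q}$, $A=\{\pi\}\cup[0,1]$. Then every point of $\{d_n\}\cap A$ lies in $[0,1]$, so each small entry of your candidate neighborhoods contains a point of $[0,1]$, while the large entry $U_\alpha$ contains all of $A$; by Lemma~\ref{le1} no member of your family is contained in the Vietoris neighborhood $\langle (\pi-1/2,\pi+1/2),\ (-1,2)\rangle$ of $A$, which requires some entry inside $(\pi-1/2,\pi+1/2)$. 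So the family is not a base at $A$ and the construction fails as stated. The repair is exactly what the paper does: $A$ itself is separable (separable metrizable spaces are hereditarily separable), so choose the countable dense set \emph{inside} $A$ and attach the lower conditions to its points. With that correction, and with an actual argument for the monotonicity of the induced outer family in the necessity (your phrase ``restricted appropriately'' hides the paper's finite-selection trick showing $\mathbf{U}_\beta\subset\mathbf{U}_\alpha$ implies $U_\beta\subset U_\alpha$), your plan becomes the paper's proof.
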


\begin{proof}
Necessity. Suppose $(CL(X), \tau_V)$ has an $\omega^\omega$-base. First, we claim that $X$ is separable. Suppose not, there is a closed discrete subset $D\subset X$ with $|D|=\omega_1$. Enumerate $D$ as $\{x_\alpha: \alpha<\omega_1\}$. Since $(CL(D), \tau_V)$ is a closed subspace of $(CL(X), \tau_V)$, it follows that $(CL(D), \tau_V)$ has an $\omega^\omega$-base; however, by Lemma~\ref{l1}, $(CL(D), \tau_V)$ does not have any $\omega^\omega$-base, which is a contradiction. Hence $X$ is separable. Next we prove that the boundary of each closed subset of $X$ is $\sigma$-compact.

Fix any $A\in CL(X)$. We prove that $\partial A$ is $\sigma$-compact. Without loss of generality, we may assume that $\partial A$ is nonempty. By Lemma~\ref{l2}, it suffices to prove that $\partial A$ has an $\omega^\omega$-base in $X$. Since $\{\partial A\}\in CL(X)$, we conclude that $(CL(X), \tau_V)$ has an $\omega^\omega$-base $\{{\bf U}_\alpha: \alpha\in \omega^\omega\}$ at $\{\partial A\}$. For each $\alpha\in\omega^\omega$, assume that $${\bf U}_\alpha=\bigcup\{\langle \mathcal{U_\gamma}(\alpha)\rangle: \gamma\in \Gamma_\alpha\},$$  where each $\mathcal{U_\gamma}(\alpha)$ is a finite family of open subsets of $X$; let $$U_\alpha=\bigcup\{\bigcup\mathcal{U}_\gamma(\alpha): \gamma
\in \Gamma_{\alpha}\}.$$ In order to complete the proof, we need to prove the following Claim 1.

\smallskip
{\bf Claim 1.} The family $\{U_\alpha: \alpha\in  \omega^\omega\}$ is an $\omega^\omega$-base for $\partial A$ in $X$.

\smallskip
Indeed, take any $\alpha, \beta\in\omega^\omega$ with $\alpha<\beta$. Then since ${\bf U}_\beta\subset {\bf U}_\alpha$, we conclude that $U_\beta\subset U_\alpha$. Suppose not, there exists $x\in U_\beta\setminus U_\alpha$ such that $x$ belongs to some $U'\in \mathcal{U}_\gamma(\beta)$, where $\gamma\in\Gamma_{\beta}$. Enumerate $\mathcal{U}_\gamma(\beta)$ as $\{U_i(\gamma): i\leq n_\gamma\}$. For any $i\leq n_\gamma$, pick $x_i\in U_i(\gamma)$; let $B=\{x\}\cup\{x_i: i\leq n_\gamma\}$. Then $B\in {\bf U}_\beta\setminus {\bf U}_\alpha$, which is a contradiction.

Let $\partial A\subset V$ with $V$ open in $X$. Then $\langle V\rangle$ is an open neighborhood of $\{\partial A\}$ in $(CL(X), \tau_V)$, hence there exists an $\alpha\in\omega^\omega$ such that $\{\partial A\}\in {\bf U}_\alpha\subset \langle V\rangle$. Then it is straightforward to prove that $U_\alpha\subset V$ by Lemma~\ref{le1}.

Sufficiency. Suppose $X$ is separable and the boundary of each closed subset of $X$ is $\sigma$-compact. Fix any $A\in CL(X)$. Then $A$ is separable and $\partial A$ is $\sigma$-compact. Hence $\partial A$ has an $\omega^\omega$-base $\{U_\alpha: \alpha\in \omega^\omega\}$ in $X$ by Lemma ~\ref{l2}, then $\{U_\alpha\cup \mbox{int}(A): \alpha\in \omega^\omega\}$ is an $\omega^\omega$-base at $A=\partial A\cup \mbox{int}(A)$ in $X$. Without loss of generality, we may assume $\{U_\alpha: \alpha\in \omega^\omega\}$ is an $\omega^\omega$-base at $A$ in $X$.

Since $A$ is separable, let $D=\{x_n: n\in\omega\}$ be a countable dense subset of $A$. For each $n\in\mathbb{N}$, let $\{B_m(n): m\in\omega\}$ be a countable, decreasingly local base at $x_n$ in $X$. For each $\alpha\in\omega^\omega$,
put $${\bf U}_\alpha=\langle B_{\alpha(1)}(1)\cap U_{\alpha},  B_{\alpha(1)}(2)\cap U_{\alpha},..., B_{\alpha(1)}(\alpha(1))\cap U_{\alpha}, U_{\alpha}\rangle;$$ then ${\bf U}_\alpha$ is a neighborhood of $A$. Next, it suffices to prove the following Claim 2.

\smallskip
{\bf Claim 2.} The family $\{{\bf U}_\alpha: \alpha\in \omega^\omega\}$ is an $\omega^\omega$-base at $A$ in $(CL(X), \tau_V)$.

\smallskip
We divide the proof into the following two steps (a) and (b).

\smallskip
(a) Take any $\alpha, \beta\in\omega^\omega$ with $\alpha<\beta$. Then $$U_{\beta}\subset U_{\alpha}, \alpha(1)\leq\beta(1), B_{\beta(1)}(i)\subset B_{\alpha(1)}(i)$$  for each $i\leq \alpha(1)$. By Lemma~\ref{le1}, we have $$\langle B_{\beta(1)}(1)\cap U_{\beta},  B_{\beta(1)}(2)\cap U_{\beta},..., B_{\beta(1)}(\alpha(1))\cap U_{\beta}, ..., B_{\beta(1)}(\beta(1))\cap U_{\beta}, U_{\beta}\rangle\subset$$ $$\langle B_{\alpha(1)}(1)\cap U_{\alpha},  B_{\alpha(1)}(2)\cap U_{\alpha},..., B_{\alpha(1)}(\alpha(1))\cap U_{\alpha}, U_{\alpha}\rangle,$$ that is, ${\bf U}_\beta\subset {\bf U}_\alpha$.

\smallskip
(b) Let ${\bf W}=\langle W_1, ..., W_k\rangle$ be an open neighborhood of $A$ in $(CL(X), \tau_V)$, where each $W_{i}$ is open in $X$. Then there exists $\alpha\in\omega^\omega$ such that $U_\alpha \subset \bigcup_{i\leq k}W_i$. Pick any $x_{j_i}\in W_{i}\cap D$ for each $i\leq k$. For each $i\leq k$, there is an $m_i\in\mathbb{N}$ such that $x_{j_i}\in B_{m_i}(j_i)\subset W_i$. Let $m'=\max\{m_1, ..., m_k, \alpha(1), j_1, ..., j_k\}$. Pick $\gamma\in \omega^\omega$ with $\gamma(1)=m', \gamma(i)=\alpha(i)$ for any $i\geq 2$; then $\alpha\leq \gamma$.
 We conclude that ${\bf U_\gamma}\subset {\bf W}$. Indeed, we have $U_\gamma\subset U_\alpha\subset \bigcup_{i\leq k}W_i$. Clearly, each $W_i$ contains $B_{m_i}(j_i)$ and each $B_{m_i}(j_i)$ contains $B_{\gamma(1)}(j_i)$ since $\gamma(1)\geq m_i$ for each $i\leq k$. Note that  $\gamma(1)\geq j_i$ for each $i\leq k$, it is straightforward to prove $${\bf U_\gamma}=\langle B_{\gamma(1)}(1)\cap U_{\gamma},  B_{\gamma(1)}(2)\cap U_{\gamma},..., B_{\gamma(1)}(\gamma(1))\cap U_{\gamma}, U_{\gamma}\rangle\subset \langle W_1, ..., W_k\rangle={\bf W}.$$
\end{proof}

For the locally finite topology, we have the following result.

\begin{theorem}
Let $X$ be a metrizable space. If $(CL(X), \tau_{lf})$ has an $\omega^\omega$-base consisting of basic neighborhoods, then the boundary of each closed subset of $X$ is $\sigma$-compact.
\end{theorem}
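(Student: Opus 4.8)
The plan is to follow the template of the necessity half of Theorem~\ref{tt1}. Fix an arbitrary $A\in CL(X)$; since the boundary of a closed set is always closed and nowhere dense, $\partial A$ is a nowhere dense member of $CL(X)$, and if $\partial A=\emptyset$ there is nothing to prove. By Lemma~\ref{l2} it then suffices to show that $\partial A$ has an $\omega^\omega$-base in $X$, for this forces $\partial A$ to be $\sigma$-compact. So the whole task reduces to manufacturing such a base inside $X$ out of the hypothesized $\omega^\omega$-base of $(CL(X),\tau_{lf})$ at the point $\partial A$.

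To do this, I would first put each basic neighborhood into canonical form. A finite intersection of sets $V^+$ is again of that shape, namely $\bigl(\bigcap_i V_i\bigr)^+$, and a finite intersection $\mathcal{U}_1^-\cap\cdots\cap\mathcal{U}_m^-$ equals $(\mathcal{U}_1\cup\cdots\cup\mathcal{U}_m)^-$, where the union of finitely many locally finite families is again locally finite; hence every basic $\tau_{lf}$-neighborhood has the form $V^+\cap\mathcal{U}^-$ with $V$ open and $\mathcal{U}$ a locally finite family of open sets. Writing the given base as $\{{\bf U}_\alpha:\alpha\in\omega^\omega\}$ with ${\bf U}_\alpha=V_\alpha^+\cap\mathcal{U}_\alpha^-$, membership $\partial A\in{\bf U}_\alpha$ records exactly that $\partial A\subset V_\alpha$ and $\partial A\cap U\neq\emptyset$ for every $U\in\mathcal{U}_\alpha$. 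My candidate base in $X$ is $\{V_\alpha:\alpha\in\omega^\omega\}$.

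The single device that drives the verification, and the point I expect to carry all the weight, is to probe the hyperspace with the closed set $B=\partial A\cup\{x\}$ for a test point $x$: since $X$ is $T_1$ this $B$ lies in $CL(X)$, and since $\partial A$ already meets every $U\in\mathcal{U}_\alpha$, so does $B$, so $B\in\mathcal{U}_\alpha^-$ automatically. This is exactly what neutralizes the lower part $\mathcal{U}^-$ of the locally finite topology, which is the obstacle preventing a naive translation of hyperspace inclusions into inclusions of the $V_\alpha$. Concretely, for monotonicity take $\alpha\leq\beta$ and suppose $x\in V_\beta\setminus V_\alpha$; then $B=\partial A\cup\{x\}\in V_\beta^+\cap\mathcal{U}_\beta^-={\bf U}_\beta\subset{\bf U}_\alpha\subset V_\alpha^+$, forcing the contradiction $x\in V_\alpha$, so $V_\beta\subset V_\alpha$. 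For the neighborhood-base property, given open $W\supset\partial A$ the subbasic set $W^+$ is a $\tau_{lf}$-neighborhood of $\partial A$, so some ${\bf U}_\alpha\subset W^+$; if $x\in V_\alpha\setminus W$ then $B=\partial A\cup\{x\}\in{\bf U}_\alpha\subset W^+$ again yields the contradiction $x\in W$, so $\partial A\subset V_\alpha\subset W$. Hence $\{V_\alpha:\alpha\in\omega^\omega\}$ is an $\omega^\omega$-base for $\partial A$ in $X$, and Lemma~\ref{l2} finishes the proof. The only place demanding care is the canonical-form reduction; after that the argument is just the repeated use of the $\partial A\cup\{x\}$ trick.
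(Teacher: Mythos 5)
Your proof is correct, and its skeleton is the same as the paper's: fix $A\in CL(X)$, observe that $\partial A$ is closed and nowhere dense, reduce via Lemma~\ref{l2} to producing an $\omega^\omega$-base for $\partial A$ in $X$, harvest candidate open sets from the hypothesized hyperspace base, and transfer hyperspace inclusions down to $X$ by testing with closed sets obtained by adjoining one ``bad'' point to a set already known to lie in the relevant neighborhood. The genuine difference is the test set. The paper builds its test sets by selection: writing each base element as a union of basic sets $U_{\gamma,\alpha}^{+}\cap\mathcal{V}_{\gamma,\alpha}^{-}$, it picks a point $x_V\in V\cap U_{\gamma,\beta}$ in every member $V$ of the locally finite family $\mathcal{V}_{\gamma,\beta}$ (nonemptiness of the basic neighborhood makes the choice possible) and takes $B=\{x\}\cup\{x_V: V\in\mathcal{V}_{\gamma,\beta}\}$, invoking local finiteness to see that $B$ is closed; its candidate sets in $X$ are correspondingly $U_\alpha=\bigcup_\gamma(U_{\gamma,\alpha}\cap\bigcup\mathcal{V}_{\gamma,\alpha})$. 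Your test set $B=\partial A\cup\{x\}$ does the same job with no selection and no appeal to local finiteness, since $\partial A$ itself fulfills every lower constraint $\mathcal{U}_\alpha^{-}$ and a union of two closed sets is closed; consequently your candidate sets can be taken to be just the upper parts $V_\alpha$, for which $\partial A\subset V_\alpha$ is automatic. This buys real simplicity and even avoids a rough spot in the paper's version: with the paper's $U_\alpha$ it is not evident that $\partial A\subset U_\alpha$ (the set $\partial A$ meets each member of $\mathcal{V}_{\gamma,\alpha}$ but need not be covered by $\bigcup\mathcal{V}_{\gamma,\alpha}$), nor that a point of $U_\beta\setminus U_\alpha$ forces $B\notin{\bf U}_\alpha$, whereas the corresponding facts are immediate in your setup. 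Finally, the canonical-form reduction you flagged as the delicate step is indeed sound: finite intersections of sets $V^{+}$ collapse to a single upper set, finite intersections of sets $\mathcal{U}_i^{-}$ collapse to $(\bigcup_i\mathcal{U}_i)^{-}$, and a finite union of locally finite families is locally finite --- and this is exactly where the hypothesis that the base consists of basic neighborhoods is used, just as the paper's proof uses it to write its base elements in basic form.
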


\begin{proof}
Fix any $A\in CL(X)$. We prove that $\partial A$ is $\sigma$-compact. Without loss of generality, we may assume that $\partial A$ is nonempty. By Lemma~\ref{l2}, it suffices to prove that $\partial A$ has an $\omega^\omega$-base in $X$. Since $\{\partial A\}\in CL(X)$, we conclude that $(CL(X), \tau_V)$ has an $\omega^\omega$-base $\{{\bf U}_\alpha: \alpha\in \omega^\omega\}$ at $\{\partial A\}$, which is consist of basic neighborhoods. For each $\alpha\in\omega^\omega$, assume that $${\bf U}_\alpha=\bigcup\{U_{\gamma, \alpha}^{+}\cap\mathcal{V_{\gamma, \alpha}^{-}}: \gamma\in \Gamma_\alpha\},$$where each $U_{\gamma, \alpha}$ is open in $X$, each $\mathcal{V_{\gamma, \alpha}}$ is a locally finite family of open subsets of $X$, and each $U_{\gamma, \alpha}^{+}\cap\mathcal{V_{\gamma, \alpha}^{-}}\neq\emptyset;$ let $$U_\alpha=\bigcup\{\bigcup(U_{\gamma, \alpha}\cap\bigcup\mathcal{V_{\gamma, \alpha}}): \gamma
\in \Gamma_{\alpha}\}.$$ In order to complete the proof, it need to prove the following Claim 1.

\smallskip
{\bf Claim 1.} The family $\{U_\alpha: \alpha\in  \omega^\omega\}$ is an $\omega^\omega$-base for $\partial A$ in $X$.

\smallskip
Indeed, take any $\alpha, \beta\in\omega^\omega$ with $\alpha<\beta$. Then since ${\bf U}_\beta\subset {\bf U}_\alpha$, we conclude that $U_\beta\subset U_\alpha$. Suppose not, there exists $x\in U_\beta\setminus U_\alpha$ such that $x$ belongs to for some $V'\in \mathcal{V}_{\gamma, \beta}$ and $x\in V'\cap U_{\gamma, \beta}$, where $\gamma\in\Gamma_{\beta}$. For any $V\in \mathcal{V}_{\gamma, \beta}$, since $U_{\gamma, \beta}^{+}\cap\mathcal{V_{\gamma, \beta}^{-}}\neq\emptyset,$ we can pick $x_V\in V\cap U_{\gamma, \beta}$;  let $B=\{x\}\cup\{x_V: V\in \mathcal{V}_{\gamma, \beta}\}$. Since $\mathcal{V}_{\gamma, \beta}$ is locally finite, it follows that $B$ is closed in $X$, hence $B\in CL(X)$. However, $B\in {\bf U}_\beta\setminus {\bf U}_\alpha$, which is a contradiction.

Let $\partial A\subset V$ with $V$ open in $X$. Then $\langle V\rangle$ is an open neighborhood of $\{\partial A\}$ in $(CL(X), \tau_{lf})$, hence there exists an $\alpha\in\omega^\omega$ such that $\{\partial A\}\in {\bf U}_\alpha\subset \langle V\rangle$. Then it is straightforward to prove that $U_\alpha\subset V$.
\end{proof}

The following question is still unknown for us.

\begin{question}
Let $X$ be a metrizable space. If the boundary of each closed subset of $X$ is $\sigma$-compact, does $(CL(X), \tau_{lf})$ have an $\omega^\omega$-base?
\end{question}

Now we prove the third main theorem, which gives an answer to Question~\ref{q1}. First, we give some definitions.

A subset $A$ of a poset $P$ {\it dominates} a subset $B\subset P$ if for each $b\in B$ there is $a\in A$ with $b\leq a$.
Given two posets $P, Q$, we say that $Q$ is {\it Tukey reducible} to $P$ (denote by $Q\leq_T P$) if there exists a map $f: P\to Q$ such that for every confinal subset $C\subset P$ its image $f(C)$ is confinal in $Q$.

\begin{theorem}\cite{Ch1974}\label{th}
(Christensen) A metrizable space $X$ is Polish if and only if $\omega^\omega\succeq \mathcal{K}(X)$ if and only if $\mathcal{K}(X)\leq_T\omega^\omega$.

\end{theorem}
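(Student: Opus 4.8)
The plan is to prove the two non-trivial implications separately, after observing that $\omega^\omega\succeq\mathcal{K}(X)$ and $\mathcal{K}(X)\leq_T\omega^\omega$ are, by the definition preceding the statement, \emph{the same} assertion (Tukey domination of $\mathcal{K}(X)$ by $\omega^\omega$), where throughout $\mathcal{K}(X)$ carries the inclusion order. Thus only the equivalence ``$X$ is Polish $\iff\mathcal{K}(X)\leq_T\omega^\omega$'' needs to be established.

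For the direction ``$X$ Polish $\Rightarrow\mathcal{K}(X)\leq_T\omega^\omega$'' I would fix a complete compatible metric $d$ and a countable dense set, so that for each $n$ the balls $\{B(x_{n,k},1/n):k\in\omega\}$ cover $X$, and for $\alpha\in\omega^\omega$ set
$$C_\alpha=\bigcap_{n\in\omega}\overline{\bigcup_{k\leq\alpha(n)}B(x_{n,k},1/n)}.$$
Each $C_\alpha$ is closed, hence $d$-complete, and totally bounded (covered at scale $2/n$ by finitely many balls for every $n$), so $C_\alpha$ is compact. The assignment $\alpha\mapsto C_\alpha$ is monotone, and it is cofinal, since any compact $K$ is totally bounded and so, for each $n$, finitely many of the balls cover $K$, determining $\alpha(n)$ with $K\subseteq C_\alpha$. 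A monotone cofinal map $\omega^\omega\to\mathcal{K}(X)$ carries cofinal sets to cofinal sets, so it witnesses $\mathcal{K}(X)\leq_T\omega^\omega$. This direction is routine.

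For the converse I would argue in two stages. \emph{Separability:} if $X$ were not separable it would contain a closed discrete $D$ with $|D|=\omega_1$, and since $D$ is closed the map $K\mapsto K\cap D$ is cofinal-preserving from $\mathcal{K}(X)$ onto $\mathcal{K}(D)$, whence $\mathcal{K}(D)\leq_T\mathcal{K}(X)\leq_T\omega^\omega$ by transitivity. But $\mathcal{K}(D)$ is precisely the poset $[\omega_1]^{<\omega}$ of nonempty finite sets under inclusion, and $[\omega_1]^{<\omega}\not\leq_T\omega^\omega$: dualizing to an unbounded-preserving map $g:[\omega_1]^{<\omega}\to\omega^\omega$, the images $g(\{\xi\})$ of singletons cannot be constant on an uncountable set (else an unbounded set would map into a bounded one), so $B=\{g(\{\xi\}):\xi<\omega_1\}$ has size $\omega_1$; Lemma~\ref{l01} then yields $\gamma\in\omega^\omega$ dominating $g(\{\xi_i\})$ for infinitely many distinct $\xi_i$, while $\{\{\xi_i\}:i\in\omega\}$ is unbounded with image below $\gamma$, a contradiction. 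Hence $X$ is separable metrizable.

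\emph{Completeness.} Embedding $X$ densely in a compact metric space $\hat X$ (its completion for a totally bounded compatible metric), it suffices to show that $X$ is $G_\delta$ in $\hat X$, since a separable metrizable $G_\delta$ subset of a Polish space is Polish. Here I would first upgrade the Tukey reduction to an \emph{increasing cofinal compact resolution} $\{K_\alpha:\alpha\in\omega^\omega\}$ swallowing every compact subset of $X$ --- the standard reformulation of $\mathcal{K}(X)\leq_T\omega^\omega$ for directed targets, available through Banakh \cite{B2017} --- and then build open sets $W_m\supseteq X$ in $\hat X$ from the $1/m$-neighborhoods of the $K_\alpha$ so that $\bigcap_m W_m=X$. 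The main obstacle is exactly this last construction: the naive candidate of intersecting neighborhoods of $\bigcup\{K_\alpha:\alpha\leq\beta\}$ fails, because that union is uncountable and need not be compact, so one must instead exploit the finite-level (tree) structure of $\omega^\omega$ to produce, at each scale $1/m$, only countably many compacta whose neighborhoods still cover $X$ while their nested intersections avoid every point of $\hat X\setminus X$; equivalently, one checks that the resolution yields a complete sequence of open covers, so that $X$ is \v{C}ech-complete and hence Polish. The delicate point --- and the place where the swallowing property is indispensable --- is precisely the extraction of this $G_\delta$ / \v{C}ech-complete structure from the resolution, since analytic non-Polish spaces such as $\mathbb{Q}$ already fail $\mathcal{K}(X)\leq_T\omega^\omega$.
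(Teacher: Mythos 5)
The paper never proves Theorem~\ref{th}: it is imported verbatim from Christensen \cite{Ch1974} and used as a black box in the proof of Theorem~\ref{tt2}, so there is no internal argument to compare yours against, and your proposal must stand on its own. Two of its three pieces do stand. The implication ``$X$ Polish $\Rightarrow\mathcal{K}(X)\leq_T\omega^\omega$'' via the sets $C_\alpha=\bigcap_{n}\overline{\bigcup_{k\leq\alpha(n)}B(x_{n,k},1/n)}$ is correct (closed plus totally bounded in a complete metric gives compact, and a monotone map with cofinal image carries cofinal sets to cofinal sets). The separability step is also essentially correct, and reusing the paper's Lemma~\ref{l01} there is a nice touch; it needs only two small repairs: the map $K\mapsto K\cap D$ can have empty value, so use $K\mapsto (K\cap D)\cup\{d_0\}$ for a fixed $d_0\in D$, and you silently switch from the paper's definition of $\leq_T$ (a cofinal-preserving map $P\to Q$) to the dual form (an unbounded-preserving map $Q\to P$); that equivalence is standard for directed posets, but since your contradiction genuinely needs the dual form, it must be stated and justified or cited.

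The genuine gap is the completeness half of ``$\mathcal{K}(X)\leq_T\omega^\omega\Rightarrow X$ Polish,'' which is the actual content of Christensen's theorem, and your text does not prove it: it announces a strategy, names the obstacle, and stops. Concretely, after upgrading to a monotone swallowing compact resolution $\{K_\alpha:\alpha\in\omega^\omega\}$ and forming the level sets $X_s=\bigcup\{K_\beta:\beta(i)\leq s(i),\ i<|s|\}$ for $s\in\omega^{<\omega}$, the tree structure does give that for every $\alpha$ the set $\bigcap_n\overline{X_{s_n(\alpha)}}$ (closure in $\hat X$, where $s_n(\alpha)=(\alpha(0),\ldots,\alpha(n-1))$) is contained in $X$, because any limit of points $x_n\in X_{s_n(\alpha)}$ lies in a single $K_\gamma$ by monotonicity. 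But this argument uses only that the cover is monotone, not that it is swallowing, and it exhibits $X$ merely as a result of the Souslin operation applied to closed subsets of $\hat X$, i.e.\ it shows $X$ is analytic --- a property $\mathbb{Q}$ also has, via the monotone (non-swallowing) cover $K_\alpha=L_{\alpha(0)}$ with $\mathbb{Q}=\bigcup_n L_n$ $\sigma$-compact. So nothing in your outline yet separates your hypothesis from $\sigma$-compactness, and the two remedies you gesture at are precisely the statements that fail for the naive construction: the sets $X_s$ can have empty interior in $X$, so the ``open covers at scale $1/m$'' you would extract from them need not be covers at all, and no step of your sketch actually invokes the swallowing property to repair this. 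The known ways to close this hole are substantial: either Hurewicz's $G_\delta$-dichotomy (an analytic subset of a Polish space is Polish or contains a relatively closed copy of $\mathbb{Q}$) combined with a separate proof that $\mathcal{K}(\mathbb{Q})\not\leq_T\omega^\omega$, or Christensen's original category argument. As written, the hard direction of the theorem is asserted, not proved.
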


\begin{theorem}\label{tt2}
Let $X$ be a metrizable space. Then $(CL(X), \tau_F)$ has an $\omega^\omega$-base consisting of basic neighborhoods if and only if $X$ is a Polish space.
\end{theorem}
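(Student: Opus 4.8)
The plan is to read off from the definition of the Fell topology that, at a closed set $A\in CL(X)$, every basic neighbourhood has the shape $\mathcal{V}^-\cap (K^c)^+$ with $\mathcal{V}$ a finite family of open sets meeting $A$ and $K$ a compact set contained in $X\setminus A$. Thus the ``upper'' coordinate of a base of basic neighbourhoods at $A$ is a family $\{K_\alpha:\alpha\in\omega^\omega\}$ of compact subsets of the open set $X\setminus A$, and the whole argument amounts to showing that this family is exactly what is needed to witness $\mathcal{K}(X\setminus A)\leq_T\omega^\omega$. The bridge to Polishness is Christensen's Theorem~\ref{th}.

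\emph{Necessity.} Fix a point $x_0\in X$ and apply the hypothesis at the closed singleton $A=\{x_0\}$, obtaining an $\omega^\omega$-base $\{\mathbf{U}_\alpha:\alpha\in\omega^\omega\}$ of basic neighbourhoods, say $\mathbf{U}_\alpha=\mathcal{V}_\alpha^-\cap(K_\alpha^c)^+$ with $K_\alpha\subset X\setminus\{x_0\}$ compact. The key step is a single-step lemma: if a compact $K\subset X\setminus\{x_0\}$ satisfies $\mathbf{U}_\alpha\subset (K^c)^+$, then $K\subset K_\alpha$. I would prove this by contradiction: given $x\in K\setminus K_\alpha$, note that for each $V\in\mathcal{V}_\alpha$ we have $\{x_0\}\cap V=\{x_0\}$, and form the finite (hence closed) set $B=\{x,x_0\}$; then $B\in\mathcal{V}_\alpha^-\cap(K_\alpha^c)^+=\mathbf{U}_\alpha$, while $x\in B\cap K$ forces $B\notin (K^c)^+$, a contradiction. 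Using this together with the monotonicity of the base I would show that $\alpha\mapsto K_\alpha$ is a Tukey map: for any cofinal $C\subset\omega^\omega$ and any compact $K\subset X\setminus\{x_0\}$, pick $\beta$ with $\mathbf{U}_\beta\subset (K^c)^+$ and then $\alpha\in C$ with $\alpha\geq\beta$, so that $\mathbf{U}_\alpha\subset\mathbf{U}_\beta\subset (K^c)^+$ and the single-step lemma gives $K\subset K_\alpha$; hence $\{K_\alpha:\alpha\in C\}$ is cofinal in $\mathcal{K}(X\setminus\{x_0\})$. Therefore $\mathcal{K}(X\setminus\{x_0\})\leq_T\omega^\omega$, so $X\setminus\{x_0\}$ is Polish by Theorem~\ref{th}. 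Finally I would pass from $X\setminus\{x_0\}$ to $X$: in a completion $\widehat{X}$ of the metric, $X\setminus\{x_0\}$ is $G_\delta$ (being completely metrizable) and $\{x_0\}$ is $G_\delta$ (a closed point), so $X=(X\setminus\{x_0\})\cup\{x_0\}$ is $G_\delta$ in $\widehat{X}$ and hence completely metrizable; as it is also separable, $X$ is Polish.

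\emph{Sufficiency.} Assume $X$ is Polish and fix $A\in CL(X)$. The open set $X\setminus A$ is Polish, so by Theorem~\ref{th} there is a family $\{K_\alpha:\alpha\in\omega^\omega\}$ of compact subsets of $X\setminus A$, increasing in $\alpha$ and cofinal in $\mathcal{K}(X\setminus A)$. Since $X$ is second countable, the lower Fell topology at $A$ is first countable (every basic lower neighbourhood $\mathcal{U}^-$ contains $\bigcap_{U\in\mathcal{U}}B_{n_U}^-$ for suitable basic $B_{n_U}$ with $A\cap B_{n_U}\neq\emptyset$ and $B_{n_U}\subset U$); hence I obtain a decreasing countable base $\{L_m:m\in\omega\}$ of lower neighbourhoods of $A$, each of the form $\mathcal{U}^-$. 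I then set $\mathbf{U}_\alpha=L_{\alpha(0)}\cap (K_\alpha^c)^+$, which is a basic neighbourhood of $A$. Monotonicity is immediate, and given any basic neighbourhood $\mathcal{U}^-\cap (K^c)^+$ of $A$ I would choose $m$ with $L_m\subset\mathcal{U}^-$ and $\alpha_0$ with $K\subset K_{\alpha_0}$; then any $\alpha\geq\alpha_0$ with $\alpha(0)\geq m$ gives $\mathbf{U}_\alpha\subset\mathcal{U}^-\cap (K^c)^+$, so $\{\mathbf{U}_\alpha\}$ is the required $\omega^\omega$-base of basic neighbourhoods.

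I expect the delicate point to be the necessity direction, specifically the verification that $\alpha\mapsto K_\alpha$ is genuinely Tukey (not merely that its total image is cofinal); this is where ``consisting of basic neighbourhoods'' is used in an essential way, via the single-step lemma and the monotonicity of the base. The subsequent bootstrap from $X\setminus\{x_0\}$ Polish to $X$ Polish is routine through the $G_\delta$/complete-metrizability argument. Separability of $X$ need not be argued separately, since it is subsumed once $X\setminus\{x_0\}$ is shown to be Polish; alternatively one could derive it from Proposition~\ref{pr7}, as $(CL(D),\tau_F)$ embeds as a closed subspace of $(CL(X),\tau_F)$ for any closed discrete $D\subset X$.
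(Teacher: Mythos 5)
Your proposal is correct, but its handling of the ``only if'' direction takes a genuinely different route from the paper, so a comparison is worth recording. In the direction Polish $\Rightarrow$ base, your construction is essentially the paper's: the paper also treats the lower Fell part by a countable decreasing system indexed by $\alpha(0)$ (built there from balls around a countable dense subset of $A$ rather than from a countable base of $X$) and the upper part by $(g(\alpha')^c)^+$, where $g:\omega^\omega\to\mathcal{K}(X\setminus A)$ is a monotone cofinal map obtained from Christensen's theorem; note that both you and the paper use, without comment, the standard fact that a Tukey reduction between directed posets can be upgraded to a monotone map with cofinal image. In the converse direction the local step is common to both proofs --- extract the compacta $K_\alpha$ from the basic neighbourhoods at a singleton and show they dominate $\mathcal{K}(X\setminus\{x_0\})$; your explicit ``single-step lemma'' via the two-point closed set $\{x,x_0\}$ is exactly the justification that the paper compresses into ``it follows that''. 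The globalization, however, differs: since the $K_\alpha$ can never capture compacta passing through $x_0$, the paper manufactures a second Tukey family, choosing an open $V\ni x_0$ with $\overline{V}\neq X$ and a point $y\notin\overline{V}$, applying the hypothesis at $\{y\}$ inside the closed subspace $Y=\{y\}\cup\overline{V}$ to dominate $\mathcal{K}(\overline{V})$, and taking the union of the two maps to witness $\mathcal{K}(X)\leq_T\omega^\omega$ directly. You instead apply Christensen's theorem only to the punctured space, concluding that $X\setminus\{x_0\}$ is Polish, and then recover $X$ by the $G_\delta$ bootstrap in a completion. Your route buys two things: it avoids the auxiliary subspace $Y$ altogether --- in particular the paper's unproved (though true, since for closed $Y$ the Fell topology of $CL(Y)$ coincides with the subspace topology from $(CL(X),\tau_F)$) claim that $(CL(Y),\tau_F)$ inherits an $\omega^\omega$-base of basic neighbourhoods at $\{y\}$ --- and it does not need the existence of $V$ and $y$, which tacitly requires $|X|\geq 2$. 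What the paper's route buys is self-containment within Tukey calculus: it produces $\mathcal{K}(X)\leq_T\omega^\omega$ on the nose, with no appeal to the classical facts that completely metrizable subspaces of metric spaces are $G_\delta$ and that $G_\delta$ subsets of completely metrizable spaces are completely metrizable.
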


\begin{proof}
Sufficiency. By Theorem ~\ref{th}, we only need to prove $\mathcal{K}(X)\leq_T\omega^\omega$.

Fix any $x\in X$. Since $(CL(X), \tau_F)$ has an $\omega^\omega$-base $$\mathcal{B}=\{(\mathcal{U}_\alpha)^-\cap (K_\alpha^c)^+: \alpha\in \omega^\omega\}$$ (consisting of basic neighborhoods) at $\{x\}$, where each $\mathcal{U}_\alpha$ is a finite family of open subsets of $X$ and each $K_\alpha$ is compact in $X$, we can define a map $$f_1: \omega^\omega\to \mathcal{K}(X\setminus \{x\})\ \mbox{by}\ f_1(\alpha)=K_\alpha\ \mbox{for each}\ \alpha\in\omega^\omega.$$ Then, for any $\alpha, \beta\in\omega^\omega$ with $\alpha\leq\beta$, we have $f_1(\alpha)\subset f_1(\beta)$. Indeed, since $$(\mathcal{U}_\beta)^-\cap (K_\beta^c)^+\subset (\mathcal{U}_\alpha)^-\cap (K_\alpha^c)^+,$$ it follows that $K_\alpha\subset K_\beta$, that is, $f_1(\alpha)\subset f_1(\beta)$. Moreover, we conclude that $\{K_\alpha: \alpha\in \omega^\omega\}$ dominates $\mathcal{K}(X\setminus \{x\})$. In fact, for a compact subset $K\subset X\setminus \{x\}$, it is obvious that $(K^c)^+$ is a neighborhood of $\{x\}$ in $(CL(X), \tau_F)$, then there is a ${\bf B}_\alpha\in\mathcal{B}$ such that $${\bf B}_\alpha=(\mathcal{U}_\alpha)^-\cap (K_\alpha^c)^+\subset (K^c)^+,$$ hence $K\subset K_\alpha$. Therefore, $\{K_\alpha: \alpha\in \omega^\omega\}$ dominates $\mathcal{K}(X\setminus \{x\})$.

Let $V$ be an open neighborhood of $x$ in $X$ such that $\overline{V}\neq X$. Pick any $y\in X$ with $y\notin \overline{V}$, and let $Y=\{y\}\cup \overline{V}$. Then $(CL(Y), \tau_F)$ has an $\omega^\omega$-base $$\mathcal{C}=\{(\mathcal{W}_\alpha)^-\cap (H_\alpha)^+: \alpha\in\omega^\omega\}$$ consisting of basic neighborhoods at $\{y\}$ in $(CL(Y), \tau_F)$, where each $\mathcal{W}_\alpha$ is a finite family of open subsets of $Y$ and each $H_\alpha$ is compact in $\overline{V}$. Define a map $$f_2: \omega^\omega\to \mathcal{K}(\overline{V})\ \mbox{by}\ f_2(\alpha)=H_\alpha\ \mbox{for each}\ \alpha\in\omega^\omega.$$ Take any $\alpha, \beta\in \omega^\omega$ with $\alpha\leq \beta$; then $f_2(\alpha)\subset f_2(\beta)$. Moreover, $\{H_\alpha: \alpha\in\omega^\omega\}$ dominates $\mathcal{K}(\overline{V})$ by a similar proof above. Now define a map $$f: \omega^\omega \to \mathcal{K}(X)\ \mbox{by}\ f(\alpha)=f_1(\alpha)\cup f_2(\alpha)\ \mbox{for each}\ \alpha\in\omega^\omega.$$ Then $f$ is the desired map. Indeed, let $P$ be a confinal subset of $\omega^\omega$. We prove $f(P)$ is confinal in $\mathcal{K}(X)$. Fix any $K\in \mathcal{K}(X)$; then $$K\cap (X\setminus V)\in \mathcal{K}(X\setminus\{x\}), K\cap \overline{V}\in\mathcal{K}(\overline{V})\ \mbox{and}\ K=(K\cap (X\setminus V))\cup (K\cap\overline{V}).$$ Since $K\cap (X\setminus V)\in \mathcal{K}(X\setminus \{x\})$, $K\cap \overline{V}\in \mathcal{K}(\overline{V})$, there exist $\alpha_1, \alpha_2\in\omega^\omega$ such that $K\cap(X\setminus V)\subset f_1(\alpha_1)$, $K\cap\overline{V}\subset f_2(\alpha_2)$. Since $P$ is a confinal subset of $\omega^\omega$, there is a $\beta\in P$ with $\alpha_1<\beta, \alpha_2<\beta$, hence $$f(\beta)=f_1(\beta)\cup f_2(\beta)\supset f_1(\alpha_1)\cup f_2(\alpha_2)\supset (K\cap (X\setminus V))\cup (K\cap \overline{V})=K.$$ Therefore, $f(P)$ is a confinal subset of $\mathcal{K}(X)$, thus $\mathcal{K}(X)\leq_T\omega^\omega$.

\smallskip
Necessity. Fix any $A\in CL(X)$. We construct an $\omega^\omega$-base for $A$ in $(CL(X), \tau_F)$.
Since $X$ is Polish, it follows that $X\setminus A$ is a Polish space by \cite[Theorem 4.3.23]{E1989}. By Theorem ~\ref{th}, $\mathcal{K}(X\setminus A)\leq_T\omega^\omega$. Let $g: \omega^\omega\to \mathcal{K}(X\setminus A)$ be the map witness the Tukey reduction. Let $D=\{a_n: n\in\omega\}$  be a countable dense subset of $A$. For each $n, k\in\mathbb{N}$, let $V_k(n)=\{x\in X: d(x, a_n)\leq 1/k\}$, where $d$ is the metric on $X$. For each $\alpha\in \omega^\omega$, define $\mathcal{U}_\alpha=\{V_k(n): k= \alpha(0), n\leq \alpha(0)\}$. For any $\alpha\in \omega^\omega$, pick $\alpha'\in \omega^\omega$ with $\alpha'(i)=\alpha(i+1)$ for each $i\in\omega$, and let ${\bf B}_\alpha=(\mathcal{U}_\alpha)^-\cap (g(\alpha')^c)^+$. We claim that $\mathcal{B}=\{{\bf B}_\alpha: \alpha\in\omega^\omega\}$ is an $\omega^\omega$-base at $A$ in $(CL(X), \tau_F)$.

First, we prove $\mathcal {B}$ is a base at $A$ in $(CL(X), \tau_F)$. Let $(\mathcal{W})^-\cap (K^c)^+$ be a neighborhood of $A$, where $\mathcal{W}=\{W_i: i\leq m\}$ is a finite family of open subsets of $X$ with $W_i\cap A\neq \emptyset$, $K$ is a compact subset of $X$ with $K\cap A=\emptyset$. Choose $\beta\in \omega^\omega$ such that $K\subset g(\beta)\subset X\setminus A$. For each $i\leq m$, pick $a_{n_i}\in W_i\cap D$; then there exists $k_i\in\mathbb{N}$ such that $V_{k_i}(n_i)\subset W_i\cap (X\setminus K)$. Now let $p=\max\{k_i, n_i: i\leq m\}$, and let $\mathcal{V}=\{V_p(i): i\leq p\}$. Then $$A\in (\mathcal{V})^-\cap (g(\beta)^c)^+\subset (\mathcal{W})^-\cap (K^c)^+.$$ Pick $\gamma\in \omega^\omega$ such that $\gamma(0)=p$ and $\gamma'=\beta$. Then $${\bf B}_\gamma=(\mathcal{U}_\gamma)^-\cap (g(\gamma')^c)^+=(\mathcal{V})^-\cap (g(\beta)^c)^+\in \mathcal{B},$$ hence ${\bf B}_\gamma\subset (\mathcal{W})^-\cap (K^c)^+$.

Now take any $\alpha, \beta\in\omega^\omega$ with $\alpha\leq \beta$; then $\alpha(0)\leq \beta(0)$, $\alpha'\leq\beta'$. Each element of $\mathcal{U}_\alpha$ contains some element in $\mathcal{U}_\beta$, and $ K_{\alpha'}\subset K_{\beta'}$. Therefore, $${\bf B}_\beta=(\mathcal{U}_\beta)^-\cap (g(\beta')^c)^+\subset (\mathcal{U}_\alpha)^-\cap (g(\alpha')^c)^+={\bf B}_\alpha.$$

Thus $\mathcal{B}$ is an $\omega^\omega$-base at $A$ in $(CL(X), \tau_F)$.
\end{proof}

However, the following question is still unknown for us.

\begin{question}
If $(CL(X), \tau_F)$ has an $\omega^\omega$-base at $A\in CL(X)$, does $(CL(X), \tau_F)$ have an $\omega^\omega$-base at $A$ consisting of basic neighborhoods?
\end{question}

\begin{remark}
(1) By Corollary~\ref{c0}, Theorems~\ref{tt1} and~\ref{tt2}, $F(\mathbb{R})$, $A(\mathbb{R})$, $(CL(\mathbb{R}), \tau_V)$ and $(CL(\mathbb{R}), \tau_F)$ all have $\omega^\omega$-bases, where $\mathbb{R}$ is the real number space.

(2) It is well known that $\mathbb{R}^{\omega}$ is a polish space and not a $k_{\omega}$-space. From Theorems \ref{tt1} and~\ref{tt2}, it follow that $(CL(\mathbb{R}^{\omega}), \tau_F)$ has an $\omega^\omega$-base and $(CL(\mathbb{R}^{\omega}), \tau_V)$ does not have an $\omega^\omega$-base.

(3) It is well known that $\mathbb{Q}$ is a $\sigma$-compact space and not a polish space. From Theorems \ref{tt1} and~\ref{tt2}, it follows that $(CL(\mathbb{Q}), \tau_F)$ does not have an $\omega^\omega$-base consisting of basic neighborhoods and $(CL(\mathbb{Q}), \tau_V)$ has an $\omega^\omega$-base.

(4) For a metrizable space $X$, it follow from \cite[Theorems 7.6.30 and 7.6.36]{AT2008}, \cite[Theorem 4.6]{LRZ2020} and Theorem~\ref{tt3} that $A(X)$ has an $\omega^\omega$-base if $F(X)$ has an $\omega^\omega$-base.
\end{remark}

In \cite{GKL2015}, Gabriyelyan, K\c{a}kol and Leiderman proved that each Fr\'{e}het-Urysohn topological group having an $\omega^\omega$-base is first-countable. Next, we shall show that if $(CL(X), \tau_F)$ is a Fr\'{e}het-Urysohn space with an $\omega^\omega$-base, then it is first-countable. First, we give some technical lemmas.

Let $X_n=\{x_{ni}: i\in\omega\}$ for each $n\in\omega$ such that $X_n\cap X_m=\emptyset$ for any distinct $n, m\in\omega$, and put $$J(\omega)=\{x\}\cup(\bigcup_{n\in\omega} X_n),$$ endowed with a topology on $J(\omega)$ as follow: Each $x_{ni}$ is an isolated point for any $n, i\in \omega$, and the neighborhood form of $x$ is defined as $\{x\}\cup (\bigcup_{k\leq n}X_n)$, where $k\in \omega$. Then $J(\omega)$ is a second countable space.

\begin{lemma}\label{l11}
$(CL(J(\omega)), \tau_F)$ is not Fr\'echet-Urysohn.
\end{lemma}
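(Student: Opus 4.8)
The plan is to produce a single family $\mathcal{A}\subseteq CL(J(\omega))$ and a closed set $F$ so that $F$ lies in the $\tau_F$-closure of $\mathcal{A}$ but no sequence from $\mathcal{A}$ $\tau_F$-converges to $F$. Throughout I would use the Kuratowski description of Fell convergence: $A_m\to A$ iff (lower) every open $U$ meeting $A$ eventually meets $A_m$, and (upper) for every compact $K$ with $A\cap K=\emptyset$ one has $A_m\cap K=\emptyset$ eventually. First I would record the two structural facts about $J(\omega)$ that drive everything. Writing $O_m=\{x\}\cup\bigcup_{k\geq m}X_k$ for the basic clopen neighborhoods of $x$, each column $X_k$ is closed and discrete and no sequence lying in a single column converges to $x$; hence a compact $K$ with $x\notin K$ is finite, while a compact set containing $x$ is exactly a set $\{x\}\cup S$ with $S\cap X_k$ finite for every $k$ (such an $S$ may nevertheless meet infinitely many columns, e.g. $\{x\}\cup\{x_{k0}:k\in\omega\}$ is compact). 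In particular the compact sets through $x$ are cofinally indexed by $\omega^\omega$ via $K_\phi=\{x\}\cup\{x_{kj}:j<\phi(k)\}$, and this $\omega^\omega$-richness is exactly what will block sequential convergence.

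For the witness I would take $F=X_0$ and, for $n\geq 1$ and $i\in\omega$, set $A_{ni}=(X_0\setminus\{x_{0n}\})\cup\{x_{ni}\}$, a nonempty closed set (it differs from the closed set $X_0$ by isolated points); put $\mathcal{A}=\{A_{ni}:n\geq 1,\ i\in\omega\}$. First I would check $X_0\in\overline{\mathcal{A}}$. A basic neighborhood of $X_0$ has the form $\bigcap_{s\leq r}U_s^-\cap (K^c)^+$ with each $U_s$ meeting $X_0$ (fix $x_{0,j_s}\in U_s$) and $K$ compact with $K\cap X_0=\emptyset$. Choosing $n\geq 1$ distinct from the finitely many $j_s$, and then $i$ with $x_{ni}\notin K$ (possible since $K\cap X_n$ is finite), the set $A_{ni}$ contains each $x_{0,j_s}$ and is disjoint from $K$ (as $A_{ni}\subseteq X_0\cup\{x_{ni}\}$), so $A_{ni}$ lies in the neighborhood. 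Equivalently one checks $A_{ni}\to a_n:=X_0\setminus\{x_{0n}\}$ as $i\to\infty$ and $a_n\to X_0$ as $n\to\infty$, exhibiting $X_0$ as an iterated limit.

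Next I would show no sequence from $\mathcal{A}$ converges to $X_0$; since convergence passes to subsequences, it suffices to find a nonconvergent subsequence in every $(A_{n_k i_k})_k$. If the rows $(n_k)$ are bounded, some value $n_0$ repeats infinitely often, and along that constant-row subsequence each set omits the point $x_{0,n_0}\in X_0$; taking the open set $U=\{x_{0,n_0}\}$, the lower condition fails. If the rows are unbounded, pass to a strictly increasing subsequence $n_k\to\infty$ and define $\phi\in\omega^\omega$ by $\phi(n_k)=i_k+1$ and $\phi\equiv 0$ elsewhere. Then $K_\phi=\{x\}\cup\{x_{kj}:j<\phi(k)\}$ is compact, disjoint from $X_0$ (its column points lie in columns $n_k\geq 1$), and $x_{n_k i_k}\in K_\phi$ for every $k$, so $A_{n_k i_k}\cap K_\phi\neq\emptyset$ always and the upper condition fails. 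Either way the subsequence, hence the original sequence, fails to converge to $X_0$, so $(CL(J(\omega)),\tau_F)$ is not Fr\'echet-Urysohn.

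The hard part, and where a naive construction breaks down, is the opposing pull of the two requirements. To keep $X_0$ in the closure, every member of $\mathcal{A}$ must avoid an arbitrary compact set disjoint from $X_0$; this forbids attaching to the $A_{ni}$ any \emph{persistent} point outside $X_0$, since across the rows such points would accumulate at $x$ and their union would have compact closure disjoint from $X_0$ that no $A_{ni}$ could dodge, thereby expelling $X_0$ from $\overline{\mathcal{A}}$. Yet without a per-row marker the bounded-row sequences would actually converge to $X_0$. My resolution is to mark each row by \emph{omission} (the hole $x_{0n}$) rather than by addition, so that bounded-row sequences are defeated by the \emph{lower} condition and only the unbounded ones are left to the $\omega^\omega$-diagonal argument. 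The one genuinely technical ingredient is the precise description of the compact subsets of $J(\omega)$—in particular that $K_\phi$ is truly compact even though it spans infinitely many columns—since the diagonal choice of $\phi$ in the unbounded case rests entirely on that fact.
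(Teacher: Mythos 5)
Your proof is correct and takes essentially the same route as the paper's: a family of near-copies of $X_0$, each carrying one marker point in column $X_n$, with $X_0$ in the $\tau_F$-closure because compact sets meet each column finitely, and non-convergence of sequences split into the bounded-row case (the lower condition fails at a point of $X_0$ missing from the sets) and the unbounded-row case (a compact set through $x$ catches all the markers). The only cosmetic differences are that the paper uses initial segments $\{x_{0j}: j\leq n\}$ where you use the co-singletons $X_0\setminus\{x_{0n}\}$, and its blocking compact set is simply $\{x\}$ together with the marker points themselves rather than your fattened $K_\phi$.
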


\begin{proof}
Suppose $(CL(J(\omega)), \tau_F)$ is Fr\'echet-Urysohn.
Then, for each $n\in\omega$, let $$A_{n, i}=\{x_{0j}: j\leq n\}\cup \{x_{ni}\}.$$ Now put $\mathcal{A}=\{A_{n, i}: n\in\mathbb{N}, i\in \omega\}.$ We claim that $X_0$ belongs to the closure of $\mathcal{A}$ in $(CL(J(\omega)), \tau_F)$.

Indeed,take any open neighborhood ${\bf U}=\mathcal{U}^-\cap (K^c)^+$ of $X_0$ in $(CL(J(\omega)), \tau_F)$, where $\mathcal{U}=\{U_i: i\leq k\}$ a finite family of open subsets of $J(\omega)$ and $K$ is a compact subset of $J(\omega)$. Pick $y_i\in X_0\cap U_i$ for any $i\leq k$; then there is $m\in\omega$ such that $\{y_i: i\leq k\}\subset \{x_{0i}: i\leq m\}$, Let ${\bf V}=\{x_{01}\}^-\cap ...\cap\{x_{0m}\}^-\cap (K^c)^+$; then ${\bf V}$ is a neighborhood of $X_0$ and ${\bf V}\subset {\bf U}$. We conclude that ${\bf V}$ contains some $A_{m, i}$. In fact, since $K$ is compact, it follows that $K\cap X_m$ is finite; hence we can pick $x_{mi_0}\in X_m\setminus K$. Thus $A_{m, i_0}\in {\bf V}\subset {\bf U}$. Therefore, $X_0$ belongs to the closure of $\mathcal{A}$ in $(CL(J(\omega)), \tau_F)$.

Since $(CL(J(\omega)), \tau_F)$ is Fr\'echet-Urysohn, there is a sequence $\{B_p: p\in\omega\}\subset \{A_{n, i}: n\in\mathbb{N}, i\in \omega\}$ such that $B_p\to X_0$ as $p\rightarrow\infty$. Put $$\mathbb{N}_{1}=\{n\in\omega: B_p\cap X_n\neq\emptyset\ \mbox{for some}\ p\in\omega\}.$$ We prove that $\mathbb{N}_{1}$ is an infinite subset of $\omega$.

Suppose not, there is $m_{0}\in\omega$ such that $B_p\subset \bigcup_{i\leq m_{0}}X_i$ for any $p\in\omega$, which implies that $\{B_p: p\in\omega\}\subset\{A_{n, i}: n\leq m_{0}, i\in \omega\}$. Choose any $x_{0q}\in X_0$, where $q>m_{0}$. Let ${\bf U}_1=\{x_{0q}\}^-$. Then ${\bf U}_1$ is a neighborhood of $X_0$, and ${\bf U}_1\cap \{B_p: p\in\omega\}=\emptyset$, which is a contradiction. Hence $\mathbb{N}_{1}$ is an infinite subset of $\omega$.
Therefore, there is a subsequence $\{B_{p_j}: j\in \omega\}$ of $\{B_p: p\in\omega\}$ such that $p_{0}>0$,  $\{p_j: j\in\omega\}$ is increasing and $x_{n_{p_j}i_{p_j}}\in X_{n_{p_j}}\cap B_{p_j}$ for any $j\in\omega$. Let $K=\{x\}\cup\{x_{n_{p_j}i_{p_j}}: j\in\omega\}$; then $K$ is a compact subset of $J(\omega)$ that doesn't meet $X_0$. Then $(K^c)^+$ is a neighborhood of $X_0$ and $B_{p_j}\notin (K^c)^+$ for each $j\in\omega$. This is a contradiction.

Therefore, $(CL(J(\omega)), \tau_F)$ is not Fr\'echet-Urysohn.
\end{proof}

\begin{lemma}\label{l12}
$(CL(S_\omega), \tau_F)$ is not Fr\'echet-Urysohn.
\end{lemma}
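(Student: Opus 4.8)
The plan is to produce a single point $A\in CL(S_\omega)$ and a family $\mathcal A\subseteq CL(S_\omega)$ witnessing failure of the Fr\'echet-Urysohn property: $A$ will lie in the $\tau_F$-closure of $\mathcal A$ while no sequence from $\mathcal A$ $\tau_F$-converges to $A$. Write $S_\omega=\{\infty\}\cup\bigcup_{n\in\omega}T_n$ with $T_n=\{t_{n,i}:i\in\omega\}$ a sequence of isolated points converging to $\infty$, so that the basic neighborhoods of $\infty$ are exactly the sets $U_g=\{\infty\}\cup\{t_{n,i}:n\in\omega,\ i\ge g(n)\}$ for $g\in\omega^\omega$. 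I would take $A=\{\infty\}$ and, for each $n\in\omega$ and each $i\ge 1$, set $C_{n,i}=\{t_{n,0},t_{n,i}\}$, putting $\mathcal A=\{C_{n,i}:n\in\omega,\ i\ge1\}$, a family of finite (hence closed) sets. Here $t_{n,0}$ serves as an \emph{anchor} and $t_{n,i}$ as a \emph{climber}.

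First I would record the convergence conditions for a sequence $(B_p)$ with $\tau_F$-limit $\{\infty\}$. Since a compact set disjoint from $\{\infty\}$ is a compact subset of the discrete space $\bigcup_n T_n$ and hence finite, the \emph{upper} condition, read from the subbasic neighborhoods $(K^c)^+$, says that each point $t_{n,i}$ lies in only finitely many $B_p$; the \emph{lower} condition, read from the neighborhoods $U_g^{-}$, says that for every $g\in\omega^\omega$ one has $B_p\cap U_g\ne\emptyset$ for all large $p$. Next I would check $\{\infty\}\in\overline{\mathcal A}$: given a basic neighborhood $\mathcal U^{-}\cap(K^c)^+$ of $\{\infty\}$, where every member of $\mathcal U$ contains $\infty$ and $K$ is compact with $\infty\notin K$, the set $K$ is finite, and since each member of $\mathcal U$ contains a basic neighborhood of $\infty$ it suffices to meet the single set $U_g$ obtained by intersecting them while avoiding $K$. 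Choosing $n$ with $t_{n,0}\notin K$ and then $i\ge\max(1,g(n))$ with $t_{n,i}\notin K$ gives $C_{n,i}$ in this neighborhood.

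Finally, suppose toward a contradiction that some sequence $C_{n_p,i_p}\to\{\infty\}$. Applying the upper condition to the one-point compact set $\{t_{n,0}\}$ shows that, for each fixed $n$, the index $n$ occurs only finitely often among the $n_p$; hence $n_p\to\infty$, and I may pass to a subsequence along which the $n_p$ are pairwise distinct. Now I diagonalize against the lower condition: define $g\in\omega^\omega$ by $g(n_p)=i_p+1$ on these distinct values and $g\equiv1$ elsewhere. Then for every $p$ in the subsequence one has $t_{n_p,0}\notin U_g$ (as $g(n_p)\ge1$) and $t_{n_p,i_p}\notin U_g$ (as $i_p<g(n_p)$), so $C_{n_p,i_p}\cap U_g=\emptyset$; this contradicts the lower condition for $g$. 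Therefore no sequence of $\mathcal A$ converges to $\{\infty\}$, and $(CL(S_\omega),\tau_F)$ is not Fr\'echet-Urysohn.

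The main obstacle, and the place where the argument must diverge from that of Lemma~\ref{l11}, is the shape of the compact sets of $S_\omega$. In $J(\omega)$, escaping points spread over infinitely many spokes converge to the apex and so assemble into one compact set that defeats the upper convergence; in $S_\omega$ every compact set meets only finitely many spokes, so that mechanism is unavailable. The remedy is to defeat the \emph{lower} convergence instead, using that $\infty$ has an $\omega^\omega$-indexed neighborhood base. The role of the anchors $t_{n,0}$ is precisely to force, via the upper condition, the climbing spokes $n_p$ to tend to infinity and thus be distinct along a subsequence; this distinctness is exactly what lets a single diagonal function $g$ carve out a neighborhood $U_g$ of $\infty$ that the subsequence permanently avoids.
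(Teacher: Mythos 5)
Your proof is correct, and it takes a genuinely different---and, as it happens, sounder---route than the paper's. Both arguments share the same skeleton: exhibit a family of closed sets whose $\tau_F$-closure contains a designated point, then, given a hypothetical convergent sequence from that family, force the spokes involved to tend to infinity and finally defeat the lower (hitting) condition with a diagonal neighborhood of the apex. The difference lies in the witness family. The paper works at the point $A=\{x\}\cup\{x_i(0):i\in\omega\}$ with the sets $A_{n,j}=\{x_j(n)\}\cup\{x_i(0):i\le n\}$, forcing unboundedness of the spokes through the lower condition on spoke $0$; you work at the singleton $\{\infty\}$ with the two-point sets $C_{n,i}=\{t_{n,0},t_{n,i}\}$, forcing the spokes to be eventually distinct through the upper (compact-avoidance) condition applied to the anchors $t_{n,0}$.

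This difference is not cosmetic: the paper's final step is incorrect, and your anchors are precisely what repair it. In the paper's proof, $V=\{x\}\cup\{x_i(n):i>f(n), n\in\omega\}$ is a neighborhood of $x$, hence contains the tail $\{x_i(0):i>f(0)\}$ of spoke $0$; since $A_{n_r,j_{n_r}}\supseteq\{x_i(0):i\le n_r\}$ and $n_r\to\infty$, one gets $A_{n_r,j_{n_r}}\cap V\ne\emptyset$ for all large $r$, so the claimed relation $V^-\cap\{A_{n_r,j_{n_r}}:r\in\omega\}=\emptyset$ fails. Moreover the defect cannot be patched within that setup: every compact subset of $S_\omega$ disjoint from $A$ is a finite set of isolated points off spoke $0$, so any sequence $A_{n_r,j_r}$ with $n_r\to\infty$ genuinely $\tau_F$-converges to $A$, and the paper's family therefore cannot witness the failure of the Fr\'echet--Urysohn property at $A$. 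Your family is immune to this problem because each $C_{n,i}$ is contained in a single spoke: once the spokes $n_p$ are pairwise distinct, the diagonal $g(n_p)=i_p+1$ yields a neighborhood $U_g^-$ of $\{\infty\}$ that the entire subsequence avoids, giving the contradiction. Your argument, essentially verbatim, would be a suitable replacement for the proof in the text.
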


\begin{proof}
 Suppose $(CL(S_\omega), \tau_F)$ is Fr\'echet-Urysohn. Let $$S_\omega=\{x\}\cup\{x_i(n): i, n\in\omega\},$$ for each $n\in\omega$, $x_i(n)\to x$ as $i\to\infty$. Let $A=\{x\}\cup\{x_i(0): i\in\omega\}$. Then $A\in CL(S_\omega)$; let $A_{n, j}=\{x_j(n)\}\cup\{x_i(0): i\leq n\}$ for any $j, n\in\omega$. We claim that $A\in Cl\{A_{n, j}: n, j\in\omega\}$.

 Indeed, take any basic open neighborhood ${\bf V}=\cap_{j\leq k}U_j^-\cap (K^c)^+$ of $A$, where each $U_{j}$ is open and $K$ is compact in $S_\omega$. We prove that ${\bf V}\cap\{A_{n, j}: n, j\in\omega\}\neq\emptyset$. Clearly, $A\cap K=\emptyset$; for each $j\leq k$, pick $x_{i_j}(0)\in U_j\cap (A\setminus\{x\})$. Then $${\bf U}=\cap_{j\leq k}\{x_{i_j}(0)\}^-\cap (K^c)^+$$ is an open neighborhood of $A$ and ${\bf U}\subset {\bf V}$. Since $K$ is compact and $x\notin K$, there is $m\in\omega$ such that $K\cap \{x_i(n): i\geq m, n\in\mathbb{N}\}=\emptyset$. Then $A_{n, m}\cap K=\emptyset$ for all $n\in\omega$. Pick any $p\geq \max\{i_j: j\leq k\}$; then $A_{p, m}\in {\bf U}$. Therefore, the point $A$ belongs to the closure of $\{A_{n, j}: n, j\in\omega\}$ in $(CL(S_\omega), \tau_F)$.

 Since $(CL(S_\omega), \tau_F)$ is Fr\'echet-Urysohn. There is a sequence $\{A_{n_r, j_{n_r}}: r\in\omega\}\subset \{A_{n, j}: n, j\in\omega\}$ converging to $A$, then $\{n_r: r\in\omega\}$ is unbounded. Indeed, if not, there is $q\in\omega$ such that $q> \max\{n_r: r\in\omega\}$, then $\bigcap_{i\leq q}\{x_i(0)\}^-$ is a neighborhood of $A$; however, $(\bigcap_{i\leq q}\{x_i(0)^-\})\cap \{A_{n_r, j_{n_r}}: r\in\omega\}=\emptyset$, which is a contradiciton.  Without loss of generality, we may assume $\{n_r: r\in\omega\}$ is increasing, otherwise we may choose an increasing subsequence. Choose $f\in\omega^\omega$ such that $f(n)=j_{n}+1$ for $n=n_r, r\in\mathbb{N}$, otherwise $f(n)=0$. Let $$V=\{x_i(n): i>f(n), n\in\omega\}\cup\{x\}.$$ Then $V$ is an open neighborhood of $x$ in $S_\omega$ and $V^-$ is a neighborhood of $A$ in $(CL(S_\omega), \tau_F)$. However, $V^-\cap \{A_{n_r, j_{n_r}}: r\in\omega\}=\emptyset$, which is an contradiction.

 Therefore, $(CL(S_\omega), \tau_F)$ is not Fr\'echet-Urysohn.
\end{proof}

A space $X$ is said to be {\it countable tightness} if, for each subset $A$ and $x\in \overline{A}$, there exists a countable subset $C$ of $A$ such that $x\in \overline{C}.$ Moreover, a space $X$ is called {\it Lindel\"of} if each open cover of $X$ has a countable subcover.

\begin{lemma}\label{l13}
For a space $X$, if $(CL(X), \tau_F)$ is of countable tightness, then $X$ is Lindel\"of.
\end{lemma}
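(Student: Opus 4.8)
The plan is to prove the contrapositive: if $X$ is not Lindel\"of, then $(CL(X),\tau_F)$ fails to have countable tightness. So I would fix an open cover $\{W_i:i\in I\}$ of $X$ admitting no countable subcover. Since no single $W_i$ equals $X$, each $F_i:=X\setminus W_i$ is a nonempty closed set, the family $\{F_i:i\in I\}$ has empty intersection, and---because no countable subfamily of $\{W_i\}$ covers $X$---every countable subfamily of $\{F_i\}$ has nonempty intersection. For a finite nonempty $E\subseteq I$ write $G_E:=\bigcap_{i\in E}F_i$; these are nonempty closed sets, the family $\{G_E\}$ is downward directed (with $G_E\cap G_{E'}=G_{E\cup E'}$), and $\bigcap_E G_E=\emptyset$. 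Morally the sets $G_E$ shrink to the empty set, which would be the natural witness; the obstruction is that $\emptyset\notin CL(X)$. I would circumvent this by grafting a fixed point: pick any $a\in X$, put $A_0:=\{a\}$ and $\mathcal{S}:=\{G_E\cup\{a\}:E\in[I]^{<\omega}\setminus\{\emptyset\}\}\subseteq CL(X)$, and then aim to show $A_0\in\overline{\mathcal{S}}$ while $A_0\notin\overline{\mathcal{C}}$ for every countable $\mathcal{C}\subseteq\mathcal{S}$.

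First I would check $A_0\in\overline{\mathcal{S}}$. A basic $\tau_F$-neighborhood of $\{a\}$ has the form $\mathcal{U}^-\cap (K^c)^+$ with $K$ compact, $a\notin K$, and $a\in U$ for every $U$ in the finite family $\mathcal{U}$. Each $G_E\cup\{a\}$ automatically lies in $\mathcal{U}^-$ since it contains $a$, so it suffices to find $E$ with $(G_E\cup\{a\})\cap K=\emptyset$, i.e. $G_E\cap K=\emptyset$. As $\{G_E\cap K\}$ is a downward directed family of closed subsets of the compact set $K$ with empty intersection, it cannot have the finite intersection property, so by directedness some single $G_E\cap K$ is already empty; then $G_E\cup\{a\}\in\mathcal{U}^-\cap(K^c)^+\cap\mathcal{S}$. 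Hence $A_0\in\overline{\mathcal{S}}$.

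The heart of the argument---and the step where the naive ``limit $\emptyset$'' idea must be repaired---is showing $A_0\notin\overline{\mathcal{C}}$ for an arbitrary countable $\mathcal{C}=\{G_{E_n}\cup\{a\}:n\in\omega\}\subseteq\mathcal{S}$. Set $J:=\bigcup_n E_n$, a countable subset of $I$. Unwinding the Fell neighborhoods of $\{a\}$, it is enough to produce a single point $b\neq a$ lying in every $G_{E_n}$: then $\{b\}$ is compact, $(\{b\}^c)^+$ is an open neighborhood of $\{a\}$, and it misses every member of $\mathcal{C}$ (each contains $b$, since $E_n\subseteq J$ gives $\bigcap_{i\in J}F_i\subseteq G_{E_n}$), so $A_0\notin\overline{\mathcal{C}}$. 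Such a $b$ is exactly a point of $\bigcap_{i\in J}F_i$ other than $a$. By the countable intersection property this set is nonempty; the only way it could fail to contain a point $\neq a$ is to equal $\{a\}$, and this is precisely what non-Lindel\"ofness forbids: $\bigcap_{i\in J}F_i=\{a\}$ would mean $\{W_i:i\in J\}$ covers $X\setminus\{a\}$, so adjoining one member of the cover containing $a$ would yield a countable subcover of $X$, a contradiction. Thus $\bigcap_{i\in J}F_i$ contains some $b\neq a$, and $(CL(X),\tau_F)$ cannot be countably tight at $\{a\}$. I expect this last point---that non-Lindel\"ofness is exactly the obstruction to the witnessing countable intersection collapsing to $\{a\}$---to be the crux; once it is isolated, the remainder is routine manipulation of the subbasic sets $\mathcal{U}^-$ and $(K^c)^+$.
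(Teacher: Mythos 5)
Your proof is correct, and it is essentially the paper's own argument run in contrapositive form: your family $\mathcal{S}=\{G_E\cup\{a\}: E\in[I]^{<\omega}\}$ is exactly the paper's family $\mathcal{F}$ of complements of finite unions of cover elements (with the distinguished point retained), your compactness/directedness argument showing $A_0\in\overline{\mathcal{S}}$ is the paper's observation that each compact $K$ with $x\notin K$ is covered by finitely many cover elements, and your separating neighborhoods $(\{b\}^c)^+$ are precisely the paper's neighborhoods $(X\setminus\{y\})^+$. The only difference is logical packaging --- the paper applies countable tightness directly to extract a countable subcover, while you start from a cover with no countable subcover and exhibit the failure of tightness at $\{a\}$ --- so the mathematical content coincides.
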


\begin{proof}
Let $\mathcal{U}$ be an any open cover of $X$. Fix any point $x\in X$, and let $$\mathcal{U}'=\{U\setminus\{x\}: U\in\mathcal{U}\}.$$ Then $\mathcal{U}'$ is an open cover of $X\setminus \{x\}$. Put $$\mathcal{V}=\{\bigcup\mathcal{W}: \mathcal{W}\in\mathcal{U'}^{<\omega}\}.$$ For any compact subset $H$ of $X\setminus\{x\}$, we can choose a fixed $V_H\in\mathcal{V}$ such that $H\subset V_H$. Let $\mathcal{F}=\{X\setminus V: V\in\mathcal{V}\}$; we claim that $\{x\}$ belongs to the closure of $\mathcal{F}$ in $(CL(X), \tau_F)$. Indeed, let $\bigcap_{i\leq n}V_i^-\cap (K^c)^+$ be a basic neighborhood of $\{x\}$, where each $V_{i}$ is open and $K$ is compact. Clearly,
$x\notin K$ and $K\subset V_K$, then $x\in X\setminus V_K\subset X\setminus K$, hence $X\setminus V_K\in \bigcap_{i\leq n}V_i^-\cap (K^c)^+$. Therefore, $\{x\}$ belongs to the closure of $\mathcal{F}$ in $(CL(X), \tau_F)$.

Since $(CL(X), \tau_F)$ is of countable tightness, there is a countable subfamily $\mathcal{F}'\subset \mathcal{F}$ such that $\{x\}$ belongs to the closure of $\mathcal{F}'$ in $(CL(X), \tau_F)$. Let $\mathcal{V'}=\{V: X\setminus V\in\mathcal{F}'\}$; we conclude that $\mathcal{V}'$ is a cover of $X\setminus \{x\}$. Indeed, for any $y\in X\setminus \{x\}$, the set $(X\setminus \{y\})^+$ is an open neighborhood of $\{x\}$, hence there is $F\in\mathcal{F}'$ such that $F\in (X\setminus \{y\})^+$, then $F\subset X\setminus \{y\}$, that is, $\{y\}\subset X\setminus F\in \mathcal{V}'$. Hence $\mathcal{V}'$ is a cover of $X\setminus \{x\}$.

For each $V\in\mathcal{V}'$, there are finitely many elements $\mathcal{U}_V\subset \mathcal{U}$ such that $V\subset \bigcup\mathcal{U}_V$. Pick any $U\in\mathcal{U}$ with $x\in U$. Then $\{U\}\cup (\bigcup\{\mathcal{U}_V: V\in\mathcal{V}'\})$ is a countable subcover of $\mathcal{U}$, hence $X$ is Lindel\"of.
\end{proof}

A space $X$ is said to be {\it strongly
Fr\'echet-Urysohn} if the following condition is satisfied: for any countable family $\{A_n: n\in \mathbb{N}\}$ of subsets of $X$ with $x\in \overline{A_n}$ for each $n$, there exists $x_n\in A_n$ such that $x_n \to x$. A space $X$ is strongly
Fr\'echet-Urysohn if and only if it is Fr\'echet-Urysohn and contains no copy of $S_{\omega}$, see \cite{LY2016}.

\begin{theorem}\label{tt4}
For a space $X$, the hyperspace $(CL(X), \tau_F)$ is a Fr\'echet-Urysohn space with an $\omega^\omega$-base if and only if $(CL(X), \tau_F)$ is first-countable.
\end{theorem}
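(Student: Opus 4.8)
The ``if'' direction is routine: a first-countable space is Fr\'echet-Urysohn, and if $\{V_n:n\in\omega\}$ is a decreasing countable neighbourhood base at a point $A\in CL(X)$, then $\{V_{\alpha(0)}:\alpha\in\omega^\omega\}$ is an $\omega^\omega$-base at $A$, because $\alpha\le\beta$ gives $\alpha(0)\le\beta(0)$ and hence $V_{\beta(0)}\subset V_{\alpha(0)}$. So the whole weight of the theorem lies in the converse, and I would organise its proof as follows.

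First I would record the hereditary tools. The map $x\mapsto\{x\}$ embeds $X$ as a closed subspace of $(CL(X),\tau_F)$, and for every closed $Y\subset X$ the inclusion $CL(Y)\hookrightarrow CL(X)$ is a closed embedding for the Fell topologies, since the subbasic sets $U^-$ and $(K^c)^+$ restrict to the corresponding subbasic sets of $(CL(Y),\tau_F)$. As Fr\'echet-Urysohnness is hereditary, Lemmas~\ref{l12} and~\ref{l11} then yield immediately that, \emph{if $(CL(X),\tau_F)$ is Fr\'echet-Urysohn, then $X$ can contain no closed copy of $S_\omega$ and no closed copy of $J(\omega)$}; likewise Fr\'echet-Urysohnness forces countable tightness, so Lemma~\ref{l13} makes $X$ Lindel\"of. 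These three facts are the structural constraints I would exploit.

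Now assume $(CL(X),\tau_F)$ is Fr\'echet-Urysohn with an $\omega^\omega$-base but, for contradiction, is not first-countable at some $A\in CL(X)$. The key mechanism is that the sequential fan is the sole obstruction to first-countability for Fr\'echet-Urysohn spaces carrying an $\omega^\omega$-base: indeed $S_\omega$ is itself Fr\'echet-Urysohn and admits an $\omega^\omega$-base yet is not first-countable, and by the characterisation quoted from \cite{LY2016} a Fr\'echet-Urysohn space is strongly Fr\'echet-Urysohn exactly when it omits $S_\omega$. I would first establish (or extract from \cite{B2017}) the general reduction that a \emph{strongly} Fr\'echet-Urysohn space with an $\omega^\omega$-base is first-countable. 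Applying its contrapositive to $(CL(X),\tau_F)$, the failure of first-countability forces $(CL(X),\tau_F)$ to contain a copy of $S_\omega$, say a vertex $A$ together with countably many sequences $(C^n_k)_{k\in\omega}$ that $\tau_F$-converge to $A$ and assemble into a fan.

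The final and hardest step is to convert this abstract fan inside the hyperspace into a concrete closed copy of $S_\omega$ or of $J(\omega)$ inside $X$. Unfolding Fell convergence $C^n_k\to A$ --- every $U^-$ around a point of $A$ is eventually entered and every compact $K$ disjoint from $A$ is eventually avoided --- I would select witness points $x^n_k\in C^n_k$ and use Lindel\"ofness together with countable tightness to confine the relevant data to countably many points of $X$. Tracking the lower ($U^-$) convergence shows the $x^n_k$ accumulate on a point of $A$ along each fibre, while tracking the upper ($(K^c)^+$) convergence controls how the fibres sit relative to the compact subsets of $X$: the case where the fibres can be separated by compacta assembles the $x^n_k$ into a closed copy of $J(\omega)$, and the case where no single point of $A$ serves all fibres assembles them into a closed copy of $S_\omega$. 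Either outcome contradicts the constraints obtained from Lemmas~\ref{l11} and~\ref{l12}, so $(CL(X),\tau_F)$ must be first-countable. The main obstacle is precisely this translation: organising the witness points so that the extracted subspace is closed and carries exactly the fan (resp. $J(\omega)$) topology, which is where the interplay between the two halves of the Fell subbase must be handled with care; a secondary point is pinning down the general reduction that isolates $S_\omega$ as the unique obstruction under an $\omega^\omega$-base.
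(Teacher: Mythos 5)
Your easy direction and your preparatory reductions are correct, and they coincide with the paper's toolkit: the closed embeddings $x\mapsto\{x\}$ and $CL(Y)\hookrightarrow CL(X)$, Lemmas~\ref{l11}, \ref{l12} and \ref{l13}, and the reduction ``strongly Fr\'echet-Urysohn $+$ $\omega^\omega$-base $\Rightarrow$ first-countable'' (Banakh's countable $cs^*$-character theorem plus a routine argument) are all exactly what the paper uses. The genuine gap is your final step: converting a copy of $S_\omega$ sitting inside $(CL(X),\tau_F)$ (vertex a closed set $B$, spines $C^n_k\to B$ in $\tau_F$) into a closed copy of $S_\omega$ or of $J(\omega)$ inside $X$. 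This step is only described, not performed, and it is precisely where the theorem lives; moreover the mechanism you propose faces concrete obstacles. Fell convergence of closed sets gives no pointwise control: in $X=\mathbb{R}$ the sets $C_k=\{0\}\cup\{k\}$ Fell-converge to $\{0\}$, so ``witness points'' $x^n_k\in C^n_k$ may escape every compact set rather than accumulate at any point of $B$; hence the set of witnesses need not be closed, need not cluster along each spine, and need not carry either target topology. Your case distinction (``fibres separated by compacta'' versus ``no single point of $A$ serves all fibres'') is not exhaustive and comes with no construction in either branch; also, the characterization from \cite{LY2016} only gives a copy of $S_\omega$ \emph{somewhere} in the hyperspace, so its vertex need not be the point $A$ at which first countability fails, as your notation suggests.

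It is worth seeing how the paper avoids this translation entirely, because that shows what a repair would require. The paper applies the same strong-FU reduction to $X$ itself (legitimate, since $X$ embeds as a closed subspace), so $X$ is first-countable; then, at each $x\in X$, it takes a nested local base with $\overline{V}_{n+1}\subset V_n$ and uses the $J(\omega)$-exclusion: if no $\overline{V}_n$ were countably compact one extracts a \emph{closed} copy of $J(\omega)$ (here closedness is easy because the points are chosen inside shrinking neighborhoods of $x$), so some $\overline{V}_n$ is countably compact, hence compact by Lindel\"ofness, i.e.\ $X$ is locally compact. First countability of the hyperspace is then not proved by hand but quoted from \cite[Theorem 11]{H1998} for locally compact Lindel\"of spaces. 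Note that producing local compactness of $X$ is unavoidable: by \cite[Corollary 7]{H1998} and \cite[3.4.E]{E1989} (as used in the paper's final theorem), first countability of $(CL(X),\tau_F)$ forces $X$ to be first-countable and hemicompact, hence locally compact. Your plan never produces local compactness except through the unexecuted translation, so as written the proposal is incomplete at its crux; any honest completion would, in effect, have to reproduce the paper's steps on $X$ rather than manipulate the fan of closed sets in the hyperspace.
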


\begin{proof}
The necessity is obvious. It suffices to prove the sufficiency. Suppose $(CL(X), \tau_F)$ is a Fr\'echet-Urysohn space with an $\omega^\omega$-base, hence $(CL(X), \tau_F)$ is of countable tightness, then $(CL(X), \tau_F)$ is  Lindel\"of by Lemma~\ref{l13}, which implies that $X$ is Lindel\"of. Moreover, it is obvious that $X$ is a Fr\'echet-Urysohn space with an $\omega^\omega$-base, hence $X$ has a countable $cs^*$-network at each $x\in X$ by \cite[Theorem 6.4.1]{B2017}. By Lemma ~\ref{l12}, $X$ contains no closed copy of $S_\omega$, hence $X$ is strongly Fr\'echet-Urysohn. It is straightforward to prove that each strongly Fr\'echet-Urysohn space with a countable $cs^*$-network at each point $x\in X$ is a first-countable space. Fix any $x\in X$, and let $\{V_n: n\in\omega\}$ be a decreasingly local base at $x$ with $\overline{V}_{n+1}\subset V_n$ for each $n\in\omega$. Now it suffices to prove that there exists $n\in\omega$ such that $\overline{V_{n}}$ is countably compact. Indeed, if some $\overline{V_{n}}$ is countably compact, then $X$ is locally compact at point $x$ since $X$ is Lindel\"of. By takeing arbitrary of $x$, we conclude $X$ is locally compact. Hence, by \cite[Theorem 11]{H1998}, $(CL(X), \tau_F)$ is first-countable. The result is desired.

Suppose that none of $\overline{V}_n$ is countably compact, then there exists an increasing subsequence $\{n_j: j\in\omega\}\subset \omega$ such that each $V_{n_j}$ contains a countable closed discrete subset $\{x_{n_j i}: i\in\omega\}$ and $V_{n_{j+1}}\cap \{x_{n_j i}: i\in\omega\}=\emptyset$. Put $Y=\{x\}\cup\{x_{n_j i}: j, i\in\omega\}$. Then $Y$ is a closed copy of $J(\omega)$. Since $Y\subset X$ and $(CL(X), \tau_F)$ is a Fr\'echet-Urysohn space, it follows that $(CL(Y), \tau_F)$ is Fr\'chet-Urysohn, which is a contradiction with Lemma ~\ref{l11}.
\end{proof}

A space $X$ is hemicompact if there is a countable family of compact subsets $\{K_i: i\in \omega\}$ such that for any $K\in \mathcal{K}(X)$, there is $n\in\omega$ with $K\subset K_n$.

\begin{theorem}
Let $X$ be a metrizable space. Then the following statements are equivalent:
\begin{enumerate}
\item $(CL(X), \tau_F)$ is a Fr\'echet-Urysohn space with an $\omega^\omega$-base.

\smallskip
\item $(CL(X), \tau_F)$ is first-countable.

\smallskip
\item $X$ is a locally compact and second countable space.
\end{enumerate}
\end{theorem}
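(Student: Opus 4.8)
The plan is to reduce everything to Theorem~\ref{tt4} and to the results already assembled for its proof, since that theorem holds for arbitrary spaces and hence for our metrizable $X$. Concretely, the equivalence $(1)\Leftrightarrow(2)$ is exactly Theorem~\ref{tt4}, so the only genuine work is to establish $(2)\Leftrightarrow(3)$. Moreover, a first-countable space is automatically Fr\'echet-Urysohn and carries an $\omega^\omega$-base, so whenever I assume $(2)$ I may freely use the entire apparatus developed in the proof of Theorem~\ref{tt4}.

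For $(2)\Rightarrow(3)$, I would argue as follows. Assuming $(CL(X),\tau_F)$ is first-countable, it is Fr\'echet-Urysohn with an $\omega^\omega$-base, and in particular it has countable tightness; by Lemma~\ref{l13} this forces $X$ to be Lindel\"of. Since $X$ is metrizable, being Lindel\"of is equivalent to being second countable (equivalently, separable), which gives the second-countability half of $(3)$. For local compactness I would invoke the argument inside the proof of Theorem~\ref{tt4}: there it is shown that a Fr\'echet-Urysohn $(CL(X),\tau_F)$ with an $\omega^\omega$-base forces $X$ to be locally compact (otherwise some point carries a decreasing local base none of whose closures is countably compact, producing a closed copy of $J(\omega)$ and contradicting Lemma~\ref{l11}). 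Combining the two, $X$ is locally compact and second countable.

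For $(3)\Rightarrow(2)$ I would simply observe that a locally compact second countable metrizable space is in particular locally compact, so \cite[Theorem 11]{H1998} yields that $(CL(X),\tau_F)$ is first-countable (one may alternatively note that for such $X$ the Fell hyperspace is, up to removing the point $\emptyset$, compact metrizable). This closes the cycle $(1)\Leftrightarrow(2)\Leftrightarrow(3)$.

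The main obstacle is not any new estimate but making sure the two structural conclusions I borrow---Lindel\"ofness (via Lemma~\ref{l13}) and local compactness (via Lemma~\ref{l11} inside the proof of Theorem~\ref{tt4})---are genuinely established there and can be quoted as stated, rather than only the final assertion that the hyperspace is first-countable. Once those are isolated, the metrizable hypothesis does the remaining work by collapsing ``Lindel\"of'' to ``second countable'', and $(3)\Rightarrow(2)$ reduces to a direct citation.
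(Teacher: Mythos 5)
Your proposal is correct, but for the substantive direction $(2)\Rightarrow(3)$ it takes a genuinely different route from the paper. The paper handles $(2)\Rightarrow(3)$ entirely by external citation: from \cite[Corollary 7]{H1998}, first-countability of $(CL(X),\tau_F)$ forces $X$ to be first-countable, hemicompact and separable, and then \cite[3.4.E]{E1989} (first-countable $+$ hemicompact $\Rightarrow$ locally compact) finishes, with separable metrizable giving second countable. You instead recycle the paper's own internal machinery: Lemma~\ref{l13} gives Lindel\"ofness (hence second countability, by metrizability), and local compactness comes from the $J(\omega)$/Lemma~\ref{l11} contradiction argument embedded in the proof of Theorem~\ref{tt4}. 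That borrowing is legitimate here, and in fact your argument runs more smoothly in the metrizable setting than in Theorem~\ref{tt4} itself: the decreasing local base $\{V_n\}$ with $\overline{V}_{n+1}\subset V_n$ needed to launch the $J(\omega)$ construction comes for free from metrizability, so you can bypass the strongly Fr\'echet-Urysohn/$cs^*$-network step entirely. The trade-off is robustness versus self-containment: the paper's citations stand on their own, whereas you quote intermediate conclusions from inside another proof rather than a stated lemma---if this were to be written up, the assertion ``$(CL(X),\tau_F)$ Fr\'echet-Urysohn with an $\omega^\omega$-base implies $X$ locally compact'' should be isolated as a lemma (its proof is exactly the second half of Theorem~\ref{tt4}'s proof). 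For $(3)\Rightarrow(2)$ the difference is cosmetic: the paper cites \cite[Theorem 5.1.5]{B1993}, you cite \cite[Theorem 11]{H1998} (which the paper itself invokes inside Theorem~\ref{tt4} for a locally compact Lindel\"of space, so it applies a fortiori to a locally compact second countable one); your parenthetical remark that the Fell hyperspace of a locally compact second countable space is, up to adjoining $\emptyset$, compact metrizable is also a valid standalone justification.
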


\begin{proof}
By Theorem~\ref{tt4}, we have (1) $\Leftrightarrow$ (2). (3) $\Rightarrow$ (2) by \cite[Theorem 5.1.5]{B1993}.

(2) $\Rightarrow$ (3). Since $(CL(X), \tau_F)$ is first-countable, it follows that $X$ is first-countable, hemicompact, separable by \cite[Corollary 7]{H1998}. By \cite[3.4.E]{E1989}, each first-countable, hemicompact space is locally compact, hence $X$ is locally compact and second countable.
\end{proof}


\begin{thebibliography}{99}
\bibitem{AT2008} A. Arhangel'ski\v{\i}, M. Tkachenko,
  {\it Topological groups and related structures}, Atlantis Press, Paris; World
  Scientific Publishing Co. Pte. Ltd., Hackensack, NJ, 2008.

\bibitem{BL2016} T. Banakh, A. Leiderman, $\mathfrak{G}$-bases in free (locally convex) topological vector spaces, 2016,
https://doi.org/10.48550/arXiv.1606.01967

\bibitem{B2017} T. Banakh, Topological spaces with an $\omega^\omega$, Diss. Math., 538(2019), 1--141.

\bibitem{B1993} G. Beer, Topologies on Closed and Closed Convex Sets
Kluwer, Dordrecht, 1993.

\bibitem{BHPV1987} G. Beer, C. Himmelberg, K. Prikry and S. Van Vleck, The locally finite topology on $2^X$,
Proc. Amer. Math. Soc., 101(1987), 168--172.

\bibitem{Ch1974} J. Christensen, Topology and Borel structure. Descriptive topology and set theory with applications to functional analysis
 and measure theory, North-Holland Publ., NY, 1974.

\bibitem{E1989} R. Engelking, General Topology, PWN, Warzawa, 1989.

\bibitem{FJPS2006} J.C. Ferrando, J. K\c{a}kol, M. L\'{o}pez Pellicer, S.A. Saxon, Tightness and distinguished
Fr\'{e}chet spaces, J. Math. Anal. Appl., 324 (2006), 862--881.

\bibitem{FJ2013} J.C. Ferrando, J. K\c{a}kol, On precompact sets in spaces $C_{c}(X)$, Georgian. Math. J., 20 (2013), 247--254

\bibitem{FR1965} S.P. Franklin, Spaces in which sequences suffice, Fund. Math., {\bf 57}(1965), 107--115.

\bibitem{GKL2015} S. Gabriyelyan, J. K\c{a}kol and A. Leiderman, On topological groups with a small base and metrizability, Fund. Math., 229(2015), 129--158.

\bibitem{GK2016} S. Gabriyelyan, J. K\c{a}kol, Free locally convex spaces with a small base, RACSAM ,111(2016), 575--585.

\bibitem{G1984} G. Gruenhage, Generalized metric spaces, Handbook of
Set-Theoretic Topology, North-Holland, Amsterdam, 1984, 423-501.

\bibitem{HP2002} L. Hol\'{a}, J. Pelant, Recent Progress in
Hyperspace Topologies, Recent Progress in General Topology II, ed.
by M. Hu\v{s}ek and J. van Mill, Elsevier, 2002, 253--279.

\bibitem{H1998} J. Hou, Character and tightness of hyperspaces with the Fell topology, Topol. Appl., 84(1998), 199--206.

\bibitem{LRZ2020} F. Lin, A. Ravsky, J. Zhang, Countable tightness and $\mathfrak{G}$-bases on free topological groups, RACSAM, 114(2020), 67.

\bibitem{LT1994} S. Lin, Y. Tanaka, Point-countable $k$-networks, closed maps, and related results,
Topology Appl., {\bf 59}(1994), 79--86.

\bibitem{LY2016} S. Lin, Z. Yun, Generalized metric spaces and mappings, Paris: Atlantis Press, 2016.

\bibitem{LL2023} C. Liu, F. Lin, Hyperspaces with a countable
character of closed subset, Topol. Appl., 328(2023), 108461.

\bibitem{LL2022} C. Liu, F. Lin, A Note on Hyperspaces by Closed Sets with Vietoris
Topology, Bull. Malays. Math. Sci. Soc., 45(2022), 1955--1974.

\bibitem{M1951} E. Michael, Topologies on spaces of subsets, Trans.
Amer. Math. Soc., 71(1951), 91--138.

\bibitem{NP2015} S.A. Naimpally, J.F. Peters, Hyperspace Topologies, Topology with Applications: Topological Spaces via Near and Far, World Scientific, Singapore, 2013.
\end{thebibliography}
\end{document}